\newcommand{\bfb}{\mathbf{b}}
\newcommand{\bff}{\mathbf{f}}
\newcommand{\bfp}{\mathbf{p}}
\newcommand{\bfq}{\mathbf{q}}
\newcommand{\bfv}{\mathbf{v}}
\newcommand{\bfx}{\mathbf{x}}
\newcommand{\bfy}{\mathbf{y}}
\newcommand{\bfalpha}{\bm{\alpha}}
\newcommand{\bfbeta}{\bm{\beta}}
\newcommand{\bfpi}{\bm{\Pi}}
\newcommand{\bftheta}{\bm{\Theta}}
\newcommand{\bfmu}{\bm{\mu}}
\newcommand{\bfzero}{\mathbf{0}}
\newcommand{\bfone}{\mathbf{1}}
\DeclareMathOperator{\diag}{diag}
\DeclareMathOperator{\rank}{rank}
\DeclareMathOperator{\tr}{tr}
\journalname{Numerische Mathematik}
\begin{document}

\title{Bounds-constrained polynomial approximation using the Bernstein basis}       

\author{Larry Allen         \and
        Robert C. Kirby 
}

\institute{Larry Allen \at
              Department of Mathematics, Baylor
    University; 1410 S. 4th Street; Waco, TX 76798-7328 \\
              Tel.: (254) 710-3208\\
              Fax: (254) 710-3659\\
              \email{Larry\_Allen@baylor.edu}           
           \and
           Robert C. Kirby \at
              Department of Mathematics, Baylor
    University; 1410 S. 4th Street; Waco, TX 76798-7328 \\
              Tel.: (254) 710-4846\\
              Fax: (254) 710-3659\\
              \email{Robert\_Kirby@baylor.edu}
}

\date{Received: date / Accepted: date}

\maketitle

\begin{abstract}
  A fundamental problem in numerical analysis and approximation theory is approximating smooth functions by polynomials.  A much harder version under recent consideration is to enforce bounds constraints on the approximating polynomial.  In this paper, we consider the problem of constructiong such approximations using polynomials in the Bernstein basis.  We consider a family of inequality-constrained quadratic programs.  In the univariate case, a quadratic cone constraint allows us to search over all nonnegative polynomials of a given degree.  In both the univariate and multivariate cases, we consider approximate problems with linear inequality constraints.
   Additionally, our method can be modified slightly to include equality constraints such as mass preservation.
\keywords{Bernstein polynomials \and Legendre polynomials \and Bernstein mass matrix \and constrained optimization}
\subclass{41A20,41A29,65D15,65K05,90C20}
\end{abstract}

\section{Introduction}
\label{sec:intro}

A fundamental problem in numerical analysis and approximation theory is to find the best approximation (with respect to a given norm) of a function by polynomials of a fixed degree. A classical version of this problem is to consider functions belonging to a Hilbert space and to find the best approximation in the norm induced by the corresponding inner product~\cite{deutsch2012best}; for example, given a continuous function $f$ over the interval [0,1], find a polynomial $p$ of degree at most $m$ that minimizes $\| f-p\|_{L^2}$. In this setting, there exists a unique solution~\cite{debnath2005hilbert}, and the coefficients of the solution with respect to a chosen basis can be characterized by a linear system involving the mass (or Gram) matrix~\cite{davis1975condition}.  Harder than a Hilbert space setting, one can also consider the best approximation in the norm of uniform convergence.  While the norm is not induced by an inner product, there still exist unique solutions which are characterized by the Chebyshev Alternation Theorem~\cite{cheney1982approximation}; however, the theorem does not provide the solution explicitly~\cite{dehghan2010uniform}, and so solutions are often found for certain types of functions~\cite{eslahchi2009uniform,dehghan2010uniform,jokar2005approximation,lubinsky2003approximation}.

Recently, attention has focused on the much harder version of these problems: approximation subject to bounds constraints; that is, given a continuous function $f$ on [0,1], find a polynomial $q$ of degree at most $m$ with $q(x)\geq 0$ for all $x\in [0,1]$ such that $\|f-q\|_{L^2}$ is minimized~\cite{despres2019positive,despres2020projection,despres2017approximation}. The set of nonnegative polynomials of degree at most $m$ on $[0,1]$ is a closed, convex subset of the space of polynomials of degree at most $m$ (see Proposition~\ref{prop:closestpos}), and so there exists a unique solution to this problem~\cite{debnath2005hilbert}; however, even representing bounds-constrained polynomials presents challenges.  In~\cite{despres2017approximation}, Despr\'{e}s uses the Lukacs Theorem~\cite{szego1939orthogonal} to give a representation of all such polynomials.   This representation is used in~\cite{despres2020projection} to give a nonlinear projection algorithm for approximating bounded functions on an interval in the uniform norm by polynomials that satisfy the same bounds on the interval.  Earlier, Nesterov~\cite{nesterov2000squared} classified nonnegative polynomials over an interval in terms of certain convex cones of their coefficients in the monomial basis.  The coefficients of a polynomial lie in this cone if and only if the polynomial has a certain sum-of-squares type representation that, in the univariate case, is equivalent to nonnegativity.  Nie and Demmel~\cite{nie2006shape}, for example, use this characterization to solve certain shape optimization problems via semidefinite programming.  The cone constraint is equivalent to nonnegativity in the univariate case, but it encodes a type of sum-of-squares property that is only a sufficient condition in the multivariate settings.

Alternatively, one can consider characterizing bounds-constrained polynomials via Bernstein polynomials. Bernstein polynomials were first introduced more than a century ago to give a constructive proof of the Weierstrass approximation theorem~\cite{bernstein1912demo}.  The Bernstein basis forms a nonnegative partition of unity with a geometric decomposition, and the convex hull of a polynomial's coefficients in the Bernstein basis contains that polynomial.  The converse of this last statement is not true, but Bernstein's Theorem~\cite{bernstein1915positive} states that a polynomial satisfies some bounds on an interval if and only if there exists a degree (greater than or equal to the polynomial degree) in which the Bernstein coefficients satisfy the same bounds.  While a general method for computing the exact number of the higher degree is unknown, an upper bound in terms of the minimum of the polynomial and the maximum absolute value of the coefficients in the monomial basis is given in~\cite{reznick2000positive}.  A thorough discussion of certificates of positivity for polynomials in the Bernstein basis (on the simplex as well as the interval) is given in~\cite{leroy2011certificates}. 
In addition, properties of Bernstein polynomials give a special structure of finite element matrices~\cite{kirby2017fast,kirby2012fast} and a structured decomposition of the inverse of matrices related to approximation and interpolation~\cite{allenkirby2020mass,allenkirby2020vandermonde}.  We make use of many of these properties in our present work.

This leads us to consider the problem of approximating a function with a polynomial  whose Bernstein coefficients (perhaps in a fixed higher degree) satisfy the function's bounds; that is, given a continuous function $f$ on [0,1], find a polynomial $q$ of degree at most $m$ such that the degree $n\geq m$ Bernstein coefficients of $q$ are nonnegative and the quantity $\|f-q\|_{L^2}$ is minimized. Similar to the nonnegative approximation, there exists a unique solution to this problem (see Proposition~\ref{prop:closestelev}). While this approximation cannot be better than the best nonnegative approximation, it allows us to frame the approximation problem as a constrained optimization problem with linear inequality constraints.  Although we do not include all nonnegative polynomials as in Nesterov's classification~\cite{nesterov2000squared}, our approach extends seamlessly to a multivariate setting.

By working in the $L^2$ norm rather than the uniform norm, we can make use of certain classical techniques.  We introduce the quadratic cost functional
\begin{equation}
\label{eq:objective}
d_f(q) = \int_0^1 (f-q)^2 dx
\end{equation}
and note that finding the coefficients of the best unconstrained polynomial approximation follows from solving a linear system with a Gram matrix.  We will enforce bounds constraints on the polynomials as linear functions of the Bernstein coefficients, obtaining a quadratic program with linear inequality constraints.  These constraints can be found explicitly and are sufficient conditions for the resulting polynomial to satisfy the desired polynomial bounds.  Exact nonnegativity can be obtained by enforcing quadratic cone constraints. 

One particular application of bounds-constrained approximation comes in the numerical solution of partial differential equations (PDEs), especially hyperbolic equations~\cite{toro2013dynamics}.   Some approaches to limiting in discontinuous Galerkin (DG) methods for hyperbolic PDEs explicitly utilize the geometric properties of Bernstein polynomials to enforce maximum principles and other invariant properties~\cite{hajduk2021monolithic,kuzmin2020subcell}.  These methods are monolithic, with a built-in limiting process.  Here, we pose a problem that is separate from any particular PDE method.  While our method could be used as a limiter with existing DG methods, the problem is interesting and challenging in its own right.  Although our method is focused on preserving a one-sided bound (non-negativity), its extension to two-sided bounds is straight-forward.

An important aspect of our approach compared to the Campos-Pinto, Charles, and Despr\'{e}s (CPCD) algorithm~\cite{despres2019positive} is the straightforward extension of Bernstein polynomials to the simplex.  While the question is natural enough to pose (including, for example, limiters for DG methods), the Lukacs theorem is a univariate result, and generalizations, if possible, are likely to be highly nontrivial~\cite{reznick1992sum}.  The Bernstein basis on a $d$-simplex consists of (suitably scaled) products of barycentric coordinates, maintaining the convex hull property~\cite{LaiSch07}.  This makes it possible to cleanly extend our approach to the simplicial multivariate case.

In this paper, we pose the problem of finding the best $L^2$ approximation of a function subject to positivity in terms of constrained quadratic programs for the approximant's Bernstein coefficients.  In the univariate case, we can pose a quadratically constrained problem over all positive polynomials of a fixed degree.  Additionally, we can approximate this constraint in either the univariate or multivariate case by considering linear inequality constraints on the coefficients.  Methods of semidefinite programming~\cite{vandenberghe1996semidefinite} may be used to efficiently solve these problems, although in the latter case we also give an (exponential) algorithm based on the KKT conditions, which finds the exact solution and also shows how special Bernstein structure can be incorporated in the process.

The paper is organized as follows. In Section~\ref{sec:notation}, we introduce the necessary notation and discuss the existence and uniqueness of solutions of the optimization problem. In Section~\ref{sec:optimization}, we give an algorithm for finding the Bernstein coefficients of the optimal polynomial under linear inequality constraints. In some use cases, it is also desirable to enforce equality constraints such mass preservation; an algorithm for this case is also discussed in Section~\ref{sec:optimization}. Generalizations of these problems to higher dimensions are considered in Section~\ref{sec:higher}. Applications and numerical results are discussed in Section~\ref{sec:num}, and final remarks are given in Section~\ref{sec:fin}.

\section{Notation and existence/uniqueness of solutions}
\label{sec:notation}

For an integer $n\geq 0$, let $\mathcal{P}^n$ denote the space of polynomials of degree at most $n$, and let $\mathcal{P}^{n,+}$ denote the subset of $\mathcal{P}^n$ given by
\begin{equation}
\label{eq:pospoly}
\mathcal{P}^{n,+} = \left\{p\in\mathcal{P}^n\ :\ p(x)\geq 0\ \text{for all}\ x\in[0,1]\right\}.
\end{equation}
For each $0\leq i\leq n$, the $i^{\text{th}}$ Bernstein polynomial of degree $n$ is given by 
\begin{equation}
\label{eq:bernsteinpoly} 
B^n_i(x) = \binom{n}{i}x^i(1-x)^{n-i}. 
\end{equation} 
The Bernstein polynomials form a basis for $\mathcal{P}^n$; that is, every polynomial $p\in\mathcal{P}^n$ can be expressed as 
\begin{equation} 
\label{eq:expansion}
p(x) = \sum_{i=0}^n \bfpi(p)_i B^n_i(x). 
\end{equation} 
We use the notation $\bfpi(p)$ to emphasize the connection between a polynomial $p\in\mathcal{P}^n$ and its vector of coefficients $\bfpi(p)\in\mathbb{R}^{n+1}$; in a similar way, every vector $\bfp\in\mathbb{R}^{n+1}$ generates a polynomial in $\mathcal{P}^n$, which we will denote $\pi(\bfp)$.

If $m\leq n$, then any polynomial expressed in the basis $\{B^m_i(x)\}_{i=0}^m$ can also be expressed in the basis $\{B^n_i(x)\}_{i=0}^n$. We denote by $E^{m,n}$ the $(n+1)\times (m+1)$ matrix that maps the coefficients of the degree $m$ representation to the coefficients of the degree $n$ representation.
It is remarked in~\cite{farouki2000legendre} that the entries of $E^{m,n}$ are given by 
\begin{equation}
\label{eq:elevation}
E^{m,n}_{ij} = \frac{\binom{m}{j}\binom{n-m}{i-j}}{\binom{n}{i}} 
\end{equation} 
with the standard convention that $\binom{n}{i}=0$ whenever $i<0$ or $i>n$.  Note that $E$ is bidiagonal for $n=m+1$ and adds bands with increasing $n$.

Define
\begin{equation}
\label{eq:elevpos}
\mathcal{P}^{m,n} = \left\{p\in\mathcal{P}^m\ :\ E^{m,n}\bfpi(p)\geq\bfzero^n\right\},
\end{equation}
where $\bfzero^n$ denotes the vector of zeros of length $n+1$, and the inequality is understood to be component-wise.

For an integer $n\geq 0$, we let $L^n$ denote the Legendre polynomial (see, for example,~\cite{attar2006orthogonal}) of degree $n$, mapped from its typical home on $[-1,1]$ to $[0,1]$ and scaled so that $L^n(1)=1$ and 
\begin{equation}
\label{eq:legendreL2norm}
\| L^n \|^2_{L^2} = \frac{1}{2n+1}. 
\end{equation} 
It was shown in~\cite{farouki2000legendre} that the Legendre polynomials are represented in the Bernstein basis via 
\begin{equation}
\label{eq:legendrebernstein} 
L^n(x) = \sum_{i=0}^{n} (-1)^{n+i}\binom{n}{i} B^n_i(x);
\end{equation} 
that is, 
\begin{equation}
\label{eq:legendrecoeff} 
\bfpi(L^n)_i = (-1)^{n+i}\binom{n}{i}. 
\end{equation}

Similar to the Bernstein basis, we let $\bftheta(p)$ refer to the vector of coefficients of a polynomial $p$ with respect to the Legendre basis.  We will exploit further connections between these basis later.

The Bernstein mass matrix of degree $n$ is the $(n+1)\times (n+1)$ matrix $M^n$ whose entries are given by 
\begin{equation}
\label{eq:bernsteinmass} 
M^n_{ij} = \int_0^1 B^n_i(x)B^n_j(x) dx. 
\end{equation} 
It was shown in~\cite{kirby2011fast} that the entries can be exactly computed as
\begin{equation} 
\label{eq:massentries}
M^n_{ij} = \binom{n}{i}\binom{n}{j} \frac{(2n-i-j)!(i+j)!}{(2n+1)!}. 
\end{equation}
It was also shown in~\cite{kirby2011fast} that if $m\leq n$, then
\begin{equation}
\label{eq:massreduction}
M^m = \left(E^{m,n}\right)^T M^n E^{m,n}.
\end{equation}

The Bernstein mass matrix connects the $L^2$ topology on the finite-dimensional space to linear algebra. To see this, we observe that if $p,q\in\mathcal{P}^n$, then
\begin{equation}
\label{eq:innprod}
\int_0^1 p(x)q(x) dx = \sum_{i,j=0}^n \bfpi(p)_i\bfpi(q)_j \int_0^1 B^n_i(x) B^n_j(x) dx
= \bfpi(p)^T M^n \bfpi(q).
\end{equation}
In particular, if
\begin{equation}
\label{eq:topology}
\| \bfp \|_{M^n} = \sqrt{ \bfp^T M^n \bfp }
\end{equation}
is the $M^n$-weighted vector norm, then
\begin{equation}
\label{eq:normrelation}
\| p \|_{L^2} = \| \bfpi(p) \|_{M^n}.
\end{equation}
By similar reasoning,
\begin{equation}
\label{eq:Legendrenormrelation}
\| p \|_{L^2} = \| \bftheta(p) \|_{D^n},
\end{equation}
where $D^n = \diag(1,1/3,\dots,1/(2n+1))$.

The previous discussion allows us to show that certain optimization problem over $\mathcal{P}^{m,+}$ and $\mathcal{P}^{m,n}$ have unique solutions. Since $\mathcal{P}^m$ is a Hilbert space, it suffices to show that $\mathcal{P}^{m,+}$ and $\mathcal{P}^{m,n}$ are closed, convex subsets of $\mathcal{P}^m$~\cite{debnath2005hilbert}.

\begin{proposition}
\label{prop:closestpos}
$\mathcal{P}^{m,+}$ is a closed, convex subset of $\mathcal{P}^{m}$.
\end{proposition}

\begin{proof}
If $p(x),q(x)\geq 0$ for all $x\in[0,1]$ and $\nu\in[0,1]$, then
\begin{equation}
\label{eq:convexpos}
\nu p(x) + (1-\nu) q(x)\geq 0\quad \text{for all}\quad x\in[0,1],
\end{equation}
and so $\mathcal{P}^{m,+}$ is convex.

Suppose $\{p_k\}_{k=0}^{\infty}$ is a sequence in $\mathcal{P}^{m,+}$ converging (in the $L^2$ norm) to a polynomial $p\in\mathcal{P}^m$. By (\ref{eq:Legendrenormrelation}), $\{\bftheta(p_k)\}_{k=0}^{\infty}$ converges to $\bftheta(p)$ in the $D^n$ norm. Since all finite-dimensional norms are equivalent, the sequence $\{\bftheta(p_k)\}_{k=0}^{\infty}$ converges to $\bftheta(p)$ in the $\ell^1$ norm. Recall that $|L^j(x)|\leq 1$ for all $x\in [0,1]$, and so
\begin{equation}
\label{eq:pointwise}
\left| p_k(x)-p(x) \right| \leq \| \bftheta(p_k)-\bftheta(p) \|_1\rightarrow 0\quad \text{as}\quad k\rightarrow \infty
\end{equation}
for each $x\in[0,1]$. Therefore, for each $x\in [0,1]$, we have that $\{p_k(x)\}_{k=0}^{\infty}$ is a sequence of nonnegative numbers converging to $p(x)$. This implies that $p\in\mathcal{P}^{m,+}$, and so $\mathcal{P}^{m,+}$ is closed.
\end{proof}

\begin{proposition}
\label{prop:closestelev}
$\mathcal{P}^{m,n}$ is a closed, convex subset of $\mathcal{P}^m$.
\end{proposition}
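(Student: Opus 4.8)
The plan is to mirror the structure of the proof of Proposition~\ref{prop:closestpos}, but to exploit the fact that the defining condition of $\mathcal{P}^{m,n}$ in~(\ref{eq:elevpos}) is already expressed as a linear inequality on the coefficient vector. First I would establish convexity. Given $p,q\in\mathcal{P}^{m,n}$ and $\nu\in[0,1]$, the coefficient map is linear, so $\bfpi(\nu p + (1-\nu)q) = \nu\,\bfpi(p) + (1-\nu)\,\bfpi(q)$. Applying the (linear) matrix $E^{m,n}$ and using $E^{m,n}\bfpi(p)\geq\bfzero^n$ and $E^{m,n}\bfpi(q)\geq\bfzero^n$ together with $\nu,1-\nu\geq 0$, a component-wise argument gives $E^{m,n}\bfpi(\nu p + (1-\nu)q)\geq\bfzero^n$, so the convex combination lies in $\mathcal{P}^{m,n}$.

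For closedness, I would take a sequence $\{p_k\}$ in $\mathcal{P}^{m,n}$ converging in $L^2$ to some $p\in\mathcal{P}^m$ and show $p\in\mathcal{P}^{m,n}$. As in the proof of Proposition~\ref{prop:closestpos}, the norm equivalence on the finite-dimensional space $\mathcal{P}^m$ lets me upgrade $L^2$ convergence to convergence of the coefficient vectors: by~(\ref{eq:normrelation}) the Bernstein coefficients $\bfpi(p_k)$ converge to $\bfpi(p)$ in the $M^m$ norm, hence in any norm on $\mathbb{R}^{m+1}$. Since $E^{m,n}$ is a fixed linear map, the map $\bfp\mapsto E^{m,n}\bfp$ is continuous, so $E^{m,n}\bfpi(p_k)\to E^{m,n}\bfpi(p)$ component-wise. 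Each $E^{m,n}\bfpi(p_k)\geq\bfzero^n$, and a component-wise limit of nonnegative vectors is nonnegative, giving $E^{m,n}\bfpi(p)\geq\bfzero^n$ and hence $p\in\mathcal{P}^{m,n}$.

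I expect this proof to be more routine than that of Proposition~\ref{prop:closestpos}, since the constraint is a preimage of the closed convex cone $\{\bfy\in\mathbb{R}^{n+1}:\bfy\geq\bfzero^n\}$ under the composition of two linear maps (coefficient extraction followed by $E^{m,n}$). The only point requiring care — the analogue of the pointwise estimate in~(\ref{eq:pointwise}) — is the passage from $L^2$ convergence to convergence of the coefficient vectors, and this is handled cleanly by norm equivalence in finite dimensions exactly as in the previous proposition. There is no genuine obstacle here; the main thing to state carefully is that the component-wise inequalities survive under the limit.
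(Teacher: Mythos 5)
Your proof is correct and takes essentially the same approach as the paper: convexity via linearity of $E^{m,n}$ and the coefficient map, and closedness by upgrading $L^2$ convergence to coefficient convergence (via~(\ref{eq:normrelation}) and finite-dimensional norm equivalence) and noting that the nonnegative orthant is closed. The only cosmetic difference is that the paper applies the norm identity directly to the elevated vectors $E^{m,n}\bfpi(p_k)$ in the $M^n$ norm, while you converge $\bfpi(p_k)$ in the $M^m$ norm and then push forward through the continuous map $E^{m,n}$; the two are interchangeable.
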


\begin{proof}
If $p,q\in\mathcal{P}^{m,n}$ and $\nu\in[0,1]$, then
\begin{align*}
\left(E^{m,n}\left[\nu \bfpi(p)+(1-\nu)\bfpi(q)\right]\right)_i &= \nu\left(E^{m,n}\bfpi(p)\right)_i + (1-\nu)\left(E^{m,n}\bfpi(q)\right)_i \\ &\geq 0,
\end{align*}
and so $\mathcal{P}^{m,n}$ is convex.

Suppose $\{p_k\}_{k=0}^{\infty}$ is a sequence in $\mathcal{P}^{m,n}$ converging (in the $L^2$ norm) to a polynomial $p\in\mathcal{P}^m$. By (\ref{eq:normrelation}), the sequence $\{E^{m,n}\bfpi(p_k)\}_{k=0}^{\infty}$ converges to $E^{m,n}\bfpi(p)$ in the $M^n$ norm. Since $[0,\infty)^{n+1}$ is closed, we have that $E^{m,n}\bfpi(p)\in [0,\infty)^{n+1}$, and so $\mathcal{P}^{m,n}$ is closed.
\end{proof}

Because we are working in a Hilbert space, we can use Hilbert orthogonality to restrict the problem of approximating a function to one of approximating a polynomial.  To see this, let $p^* \in \mathcal{P}^m$ be the best unconstrained approximation to some continuous nonnegative $f$ on $[0,1]$. Then $f-p^*$ is orthogonal to $\mathcal{P}^m$.  For any $q \in \mathcal{P}^m$, the Pythagorean Theorem lets us write
\begin{equation}
  \label{eq:pythag}
  \| f - q \|^2 = \| f - p^* \|^2 + \| p^* - q \|^2.
\end{equation}
Since $\| f - p^* \|^2$ is independent of the polynomial $q$, we can minimize the functional $\| p^* - q \|^2$ instead.  This can be more efficient in practice since via~\eqref{eq:topology}, it requires only linear algebra to evaluate rather than evaluating/integrating the continuous function $f$ during the optimization process.

This discussion (combined with Propositions~\ref{prop:closestpos} and~\ref{prop:closestelev}) allows us to state well-posed optimization problems over sets of polynomials:
\begin{theorem}
  For any continuous nonnegative $f$ defined on [0,1] and integer $m \geq 0$, there exists a unique nonnegative $p \in \mathcal{P}^{m,+}$ minimizing
  \(
  \| f - p \|_{L^2}
  \).
\end{theorem}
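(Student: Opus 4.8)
The plan is to reduce the problem of approximating the continuous function $f$ to one of approximating a single polynomial inside the finite-dimensional Hilbert space $\mathcal{P}^m$, at which point the classical projection theorem for closed convex sets applies directly. All of the analytic content has in fact been prepared already, so the proof should be short.

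First I would invoke existence and uniqueness of the best \emph{unconstrained} approximation $p^* \in \mathcal{P}^m$ to $f$. Since $f$ is continuous on the compact interval $[0,1]$ it lies in $L^2[0,1]$, and $\mathcal{P}^m$ is a finite-dimensional, hence closed, subspace of $L^2[0,1]$; orthogonal projection onto a closed subspace of a Hilbert space therefore yields a unique $p^*$ with $f - p^*$ orthogonal to $\mathcal{P}^m$. (Concretely, $p^*$ is the polynomial obtained from the Gram/mass-matrix system described in the text.)

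Next I would use the Pythagorean identity~\eqref{eq:pythag}: for every $q \in \mathcal{P}^m$, and in particular for every $q \in \mathcal{P}^{m,+} \subseteq \mathcal{P}^m$, we have $\| f - q \|_{L^2}^2 = \| f - p^* \|_{L^2}^2 + \| p^* - q \|_{L^2}^2$. Because the term $\| f - p^* \|_{L^2}^2$ is independent of $q$, minimizing $\| f - q \|_{L^2}$ over $\mathcal{P}^{m,+}$ is equivalent to minimizing $\| p^* - q \|_{L^2}$ over $\mathcal{P}^{m,+}$, and the two problems share exactly the same minimizers. Finally I would apply the Hilbert-space projection theorem~\cite{debnath2005hilbert} within $\mathcal{P}^m$: by Proposition~\ref{prop:closestpos} the set $\mathcal{P}^{m,+}$ is a closed, convex subset of the finite-dimensional (hence complete) inner-product space $\mathcal{P}^m$, so there is a unique element of $\mathcal{P}^{m,+}$ closest to the point $p^*$. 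By the equivalence just established, this element is the unique minimizer of $\| f - p \|_{L^2}$ over $\mathcal{P}^{m,+}$, which is the claim.

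I do not expect a genuine obstacle, since the only nontrivial hypotheses of the projection theorem — closedness and convexity of $\mathcal{P}^{m,+}$ — were discharged in Proposition~\ref{prop:closestpos}. The one point requiring care is the transition from the infinite-dimensional ambient space $L^2[0,1]$, where $f$ lives, down to the finite-dimensional $\mathcal{P}^m$, where the projection theorem can be applied cleanly; the Pythagorean reduction~\eqref{eq:pythag} is precisely the device that effects this transition, and it simultaneously mirrors the computational strategy of replacing $f$ by $p^*$ so that the optimization requires only linear algebra.
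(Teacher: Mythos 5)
Your proposal is correct and follows essentially the same route as the paper: the paper establishes this theorem by combining the Hilbert-space projection theorem for closed convex sets with Proposition~\ref{prop:closestpos} and the Pythagorean reduction~\eqref{eq:pythag} that replaces $f$ by its unconstrained projection $p^*$. Your write-up simply makes the same chain of reasoning explicit.
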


For univariate polynomials,  membership in $\mathcal{P}^{m,+}$ can be defined in terms of a quadratic cone constraint on the coefficients in the monomial basis, as we describe below.  Hence, the approximation problem may be posed as a quadratically constrained quadratic program and attacked via semidefinite programming.  While the cone constraint possibly could be generalized to a multivariate setting, it is no longer equivalent to nonnegativity.   Anticipating this, \eqref{eq:elevpos} says that membership in $\mathcal{P}^{m,n}$ is given explicitly by nonnegativity of a collection of linear functions of the Bernstein coefficients, and so the approximation problem over this set is a quadratic program with linear inequality constraints.  

\begin{theorem}
  \label{thm:pmn}
  For any continuous nonnegative $f$ defined on [0,1] and integers $m \geq 0$ and $n \geq m$, there exists a unique $p \in \mathcal{P}^{m,n}$ minimizing
  \(
  \| f - p \|_{L^2}.
  \)
\end{theorem}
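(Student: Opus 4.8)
The plan is to recognize Theorem~\ref{thm:pmn} as an instance of the classical best-approximation result in Hilbert spaces: every point of a Hilbert space has a unique closest point in a given closed, convex subset~\cite{debnath2005hilbert}. Almost all of the work has already been carried out in the preceding material; what remains is to assemble the pieces and to handle the one subtlety that the target function $f$ need not itself be a polynomial.

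First I would observe that $\mathcal{P}^m$, equipped with the $L^2$ inner product $\langle p,q\rangle = \int_0^1 p(x)q(x)\,dx$, is a finite-dimensional---hence complete---inner product space, that is, a Hilbert space. By Proposition~\ref{prop:closestelev}, the feasible set $\mathcal{P}^{m,n}$ is a closed, convex subset of $\mathcal{P}^m$, so all of the geometry required by the projection theorem is already in place.

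The only point requiring care is that $f$ lies in $L^2([0,1])$ but generally not in $\mathcal{P}^m$, so the projection is not literally taken within $\mathcal{P}^m$. I would dispose of this exactly as in the discussion preceding the theorem: let $p^* \in \mathcal{P}^m$ be the unconstrained $L^2$ projection of $f$ onto $\mathcal{P}^m$, so that $f - p^*$ is orthogonal to $\mathcal{P}^m$. Since every $q \in \mathcal{P}^{m,n} \subseteq \mathcal{P}^m$ makes $p^* - q \in \mathcal{P}^m$, the Pythagorean identity~\eqref{eq:pythag} gives $\|f - q\|_{L^2}^2 = \|f - p^*\|_{L^2}^2 + \|p^* - q\|_{L^2}^2$, and the first term is independent of $q$. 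Hence minimizing $\|f - q\|_{L^2}$ over $\mathcal{P}^{m,n}$ is equivalent to minimizing $\|p^* - q\|_{L^2}$ over $\mathcal{P}^{m,n}$, where now the target $p^*$ genuinely lives in the Hilbert space $\mathcal{P}^m$.

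Finally, I would invoke the closed-convex-set projection theorem applied to $p^*$ and $\mathcal{P}^{m,n}$ to conclude that a unique minimizer $p$ exists. There is no genuine obstacle here: the substantive content---closedness and convexity of $\mathcal{P}^{m,n}$---was already established in Proposition~\ref{prop:closestelev}, and the passage from $f$ to $p^*$ via~\eqref{eq:pythag} is the only step preventing a one-line appeal to the standard theorem. Equivalently, one could skip the reduction and apply the projection theorem directly in $L^2([0,1])$, using that $\mathcal{P}^{m,n}$ is a closed, convex subset of that larger Hilbert space as well.
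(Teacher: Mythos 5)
Your proposal is correct and follows essentially the same route as the paper: the paper also deduces the theorem from Proposition~\ref{prop:closestelev} (closedness and convexity of $\mathcal{P}^{m,n}$) together with the Hilbert-space projection theorem of~\cite{debnath2005hilbert}, using the Pythagorean identity~\eqref{eq:pythag} to reduce approximation of $f$ to approximation of its unconstrained projection $p^*$. Your closing remark that one could instead apply the projection theorem directly in $L^2([0,1])$ is a valid minor variant, since $\mathcal{P}^{m,n}$ is also closed there as a closed subset of the finite-dimensional subspace $\mathcal{P}^m$.
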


Enforcing two-sided bounds constraints, such as $f:[0,1] \rightarrow [0,1]$, is also well-posed and requires a simple extension.  In this case, we seek $p\in\mathcal{P}^m$ such that $p\in\mathcal{P}^{m,+}$ and $1-p\in\mathcal{P}^{m,+}$, or as an approximation, $p\in\mathcal{P}^{m,n}$ and $1-p\in\mathcal{P}^{m,n}$.  The resulting sets are still closed, convex subsets of $\mathcal{P}^m$, and so the optimization problems are well-posed.

An important question about these approximation problems is the accuracy of their solutions.
By combining Bernstein's Theorem~\cite{bernstein1915positive} with a result from~\cite{despres2017approximation}, we can give an error estimate for the best approximation in $\mathcal{P}^{m,n}$ provided that $n$ is large enough.

\begin{proposition}
\label{prop:estimate}
There exists an $N\geq m$ such that for $n \geq N$
\begin{equation}
\label{eq:estimate}
\inf_{q\in\mathcal{P}^{m,n}} \| f-q \|_{L^2} \leq 2 \inf_{q\in\mathcal{P}^m} \| f-q \|_{\infty}.
\end{equation}
\end{proposition}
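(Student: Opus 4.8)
The plan is to reduce the $L^2$ statement to a uniform-norm approximation problem, and then use Bernstein's theorem to certify that a suitable nonnegative approximant lies in $\mathcal{P}^{m,n}$ for all large $n$. The first step is to pass from $L^2$ to the sup norm using that the interval has unit length: for any $g\in\mathcal{P}^m$,
\[
  \|g\|_{L^2} = \left(\int_0^1 g(x)^2\,dx\right)^{1/2} \leq \|g\|_\infty .
\]
Hence it suffices to produce, for all $n$ beyond some threshold, a single competitor $\hat q\in\mathcal{P}^{m,n}$ with $\|f-\hat q\|_\infty \leq 2\inf_{q\in\mathcal{P}^m}\|f-q\|_\infty$, since such $\hat q$ is then admissible in the $L^2$ minimization over $\mathcal{P}^{m,n}$.

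Next I would construct such a $\hat q$ while ignoring the membership constraint, which is where the result from~\cite{despres2017approximation} enters. Write $E_m=\inf_{q\in\mathcal{P}^m}\|f-q\|_\infty$ and let $q^*$ be a best uniform approximant. The pointwise estimate $q^*(x)\geq f(x)-E_m$ together with $f\geq 0$ gives $q^*(x)+E_m\geq f(x)\geq 0$, so $\hat q := q^*+E_m$ is nonnegative on $[0,1]$ and satisfies $\|f-\hat q\|_\infty \leq \|f-q^*\|_\infty + E_m = 2E_m$. In other words, the best uniform approximation by nonnegative polynomials sits within a factor of two of the unconstrained one.

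The decisive step is to convert the analytic fact that $\hat q\geq 0$ on $[0,1]$ into the algebraic membership $\hat q\in\mathcal{P}^{m,n}$. Here I would invoke Bernstein's theorem~\cite{bernstein1915positive}: since $\hat q$ is nonnegative on $[0,1]$, there is a degree $N\geq m$ at which the Bernstein coefficients $E^{m,N}\bfpi(\hat q)$ are nonnegative, that is, $\hat q\in\mathcal{P}^{m,N}$. To upgrade this to all $n\geq N$, I would use that degree elevation composes, $E^{m,n}=E^{N,n}E^{m,N}$, and that every entry of $E^{N,n}$ is nonnegative by~\eqref{eq:elevation}; consequently $E^{m,n}\bfpi(\hat q)=E^{N,n}\left(E^{m,N}\bfpi(\hat q)\right)\geq\bfzero^n$, so $\hat q\in\mathcal{P}^{m,n}$ for every $n\geq N$. (Equivalently, this establishes the nesting $\mathcal{P}^{m,N}\subseteq\mathcal{P}^{m,N+1}\subseteq\cdots$ used implicitly in~\eqref{eq:elevpos}.) Chaining the three steps, for $n\geq N$ we obtain $\inf_{q\in\mathcal{P}^{m,n}}\|f-q\|_{L^2}\leq\|f-\hat q\|_{L^2}\leq\|f-\hat q\|_\infty\leq 2E_m$, which is~\eqref{eq:estimate}.

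I expect the Bernstein-theorem step to be the main obstacle. It is the only nonconstructive ingredient—it asserts that some $N$ exists without controlling its size—and its hypotheses are delicate when $\hat q$ vanishes on $[0,1]$, since the honest converse direction wants strict positivity. I would guard against this by shifting slightly more, replacing $\hat q$ with $\hat q+\epsilon$ for small $\epsilon>0$ so that $\hat q+\epsilon\geq\epsilon>0$ on $[0,1]$; Bernstein's theorem then applies to a strictly positive polynomial at the cost of the bound $2E_m+\epsilon$. Monotonicity of $\inf_{q\in\mathcal{P}^{m,n}}\|f-q\|_{L^2}$ in $n$, which follows from the same nesting, is then what I would lean on to pass to the limit and recover the stated constant.
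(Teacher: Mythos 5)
Your core route is the same as the paper's: bound the $L^2$ error by the uniform error on $[0,1]$, compare constrained to unconstrained uniform approximation with a factor of two, and use Bernstein's theorem to certify membership in $\mathcal{P}^{m,n}$ for all large $n$. The differences are cosmetic or additive: you re-derive the factor-of-two step via the shift $\hat q = q^*+E_m$ (the paper simply cites Theorem 1.2 of~\cite{despres2017approximation}, whose content is exactly this construction), you apply Bernstein's theorem to this explicit competitor rather than, as the paper does, to the best $L^2$ approximation in $\mathcal{P}^{m,+}$, and your observation that $E^{m,n}=E^{N,n}E^{m,N}$ with nonnegative entries, so that $\mathcal{P}^{m,N}\subseteq\mathcal{P}^{m,n}$ for $n\geq N$, spells out a detail the paper leaves implicit.

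However, the difficulty you flag in your final paragraph is genuine, and your patch does not repair it. Bernstein's theorem~\cite{bernstein1915positive} requires strict positivity: a nonnegative polynomial with an interior zero can have a negative Bernstein coefficient in \emph{every} degree. Concretely, $(x-\tfrac12)^2$ has degree-$n$ Bernstein coefficients $\tfrac14-\tfrac{i(n-i)}{n(n-1)}$, and the coefficient with $i$ nearest $n/2$ equals $-\tfrac{1}{4(n-1)}$ for $n$ even and $-\tfrac{1}{4n}$ for $n$ odd, so this polynomial lies in no $\mathcal{P}^{2,n}$. Your $\epsilon$-shift restores strict positivity, but the Bernstein threshold $N(\epsilon)$ depends on $\epsilon$ and is unbounded as $\epsilon\to 0$, so what you actually obtain is only that the non-increasing sequence $a_n=\inf_{q\in\mathcal{P}^{m,n}}\|f-q\|_{L^2}$ satisfies $\lim_{n\to\infty}a_n\leq 2E_m$. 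Monotonicity does not convert this into the claimed statement, because a non-increasing sequence can decrease strictly to its limit without ever attaining it; no finite $N$ is produced. Indeed, no argument can close this gap: for $f(x)=(x-\tfrac12)^2$ and $m=2$, the right-hand side of~\eqref{eq:estimate} is zero, while $a_n=\mathrm{dist}_{L^2}\bigl(f,\mathcal{P}^{2,n}\bigr)>0$ for every $n$ since $\mathcal{P}^{2,n}$ is closed and never contains $f$; the proposition as stated fails there. To be fair, the paper's own proof has the same defect---it applies Bernstein's theorem to the best constrained $L^2$ approximation, which need not be strictly positive---so your instinct here was sharper than the source. The honest repair is a strict-positivity hypothesis (on $\hat q$, or on the best approximation in $\mathcal{P}^{m,+}$), under which your argument with $\epsilon=0$ goes through verbatim.
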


\begin{proof}
By Bernstein's Theorem, there exists an $N\geq m$ such that the best approximation (in the $L^2$ norm) of $f$ in $\mathcal{P}^{m,+}$ belongs to $\mathcal{P}^{m,n}$ for all $n\geq N$. This means that if $n\geq N$, then
\begin{equation}
\label{eq:estineq}
\inf_{q\in\mathcal{P}^{m,n}} \| f-q \|_{L^2} = \inf_{q\in\mathcal{P}^{m,+}} \| f-q \|_{L^2} \leq \inf_{q\in\mathcal{P}^{m,+}} \| f-q \|_{\infty}.
\end{equation}
The result then follows from Theorem 1.2 in~\cite{despres2017approximation}.
\end{proof}

While a method for determining the exact value of $N$ in this theorem is unknown, an upper bound is given in~\cite{reznick2000positive}.  Currently, an error estimate for degrees of elevation less than $N$ is not known.

Now, we discuss certain linear algebraic structure that will be important in presenting a solution of the constrained optimization problem.
In~\cite{allenkirby2020mass}, we showed that $M^n$ admits the spectral decomposition
\begin{equation}
\label{eq:spectral}
M^n = Q^n\Lambda^n\left(Q^n\right)^T,
\end{equation}
where
\begin{equation}
\label{eq:diageigs}
\Lambda^n = \diag(\lambda^n_0, \dots, \lambda^n_n)
\end{equation}
is the diagonal matrix of eigenvalues with 
\begin{equation}
\label{eq:eigs}
\lambda^n_j = \frac{(n!)^2}{(n+j+1)!(n-j)!},
\end{equation}
and
\begin{equation}
\label{eq:eigenvectors}
Q^n[:,j] = \sqrt{(2j+1)\lambda^n_j} E^{j,n}\bfpi(L^j)
\end{equation}
is the orthogonal matrix of eigenvectors.  We also showed (Algorithm 3.1 in~\cite{allenkirby2020mass}) that the matrix $Q^n$ can be constructed explicitly in $\mathcal{O}(n^2)$ operators.

In Section~\ref{sec:optimization}, we will provide an algorithm for finding the coefficients of the optimal polynomial in $\mathcal{P}^{m,n}$. This algorithm provides the degree $n$ representation of a degree $m$ polynomial $q$, and so as part of the algorithm, we must convert the degree $n$ representation of $q$ to its degree $m$ representation. If the vector $\bfy$ contains the degree $n$ representation, then the degree $m$ representation is given by the least squares solution of $E^{m,n}\bfpi(q) = \bfy$; that is,
\begin{equation}
\label{eq:lstsq}
\bfpi(q) = \left( \left(E^{m,n}\right)^T E^{m,n}\right)^{-1} \left(E^{m,n}\right)^T \bfy.
\end{equation}
When $\bfy$ is in the range of $E^{m,n}$, it is also possible to solve $E^{m,n} \bfx = \bfy$ by Gaussian elimination on a rectangular matrix, but the least squares approach is more stable in practice.

The spectral decomposition of $\left(E^{m,n}\right)^T E^{m,n}$ allows us to evaluate~\eqref{eq:lstsq} efficiently.  Since $\left(E^{m,n}\right)^T E^{m,n}$ has a banded structure, it is more efficient to use Gaussian elimination on the least squares system if $n-m$ is small; however, in the interest of stating an algorithm that easily generalizes to higher dimensions, we choose to look at the spectral decomposition.

As a consequence of (\ref{eq:massreduction}) and (\ref{eq:eigenvectors}), we can characterize the eigenvalues and eigenvectors of $\left(E^{m,n}\right)^TE^{m,n}$.

\begin{proposition}
\label{thm:ETEeigs}
\begin{equation}
\label{eq:ETEeigs}
\left(E^{m,n}\right)^TE^{m,n} = Q^m\Sigma^{m,n}\left(Q^m\right)^T,
\end{equation}
where $\Sigma^{m,n} = \diag(\lambda^m_j/\lambda^n_j)_{j=0}^m$ and $Q^m$ is defined in~\eqref{eq:eigenvectors}.
\end{proposition}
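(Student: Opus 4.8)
The plan is to show that each column of $Q^m$ is an eigenvector of $\left(E^{m,n}\right)^T E^{m,n}$ with the claimed eigenvalue $\lambda^m_j/\lambda^n_j$; since $Q^m$ is orthogonal, this immediately yields the asserted spectral decomposition. The heart of the argument is a clean relationship between the eigenvectors of $M^m$ and those of $M^n$ induced by degree elevation, which I will extract from the explicit formula~\eqref{eq:eigenvectors}.

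First I would record the composition property of the elevation matrices, namely $E^{m,n} E^{j,m} = E^{j,n}$ for $j \leq m \leq n$. This holds because each $E^{m,n}$ represents the inclusion $\mathcal{P}^m \hookrightarrow \mathcal{P}^n$ in the respective Bernstein bases, so elevating from degree $j$ to $m$ and then to $n$ re-expresses the same polynomial and must agree with elevating directly from $j$ to $n$. Applying this to~\eqref{eq:eigenvectors}, for each $0 \leq j \leq m$ I obtain
\begin{equation*}
E^{m,n} Q^m[:,j] = \sqrt{(2j+1)\lambda^m_j}\, E^{m,n} E^{j,m}\bfpi(L^j) = \sqrt{(2j+1)\lambda^m_j}\, E^{j,n}\bfpi(L^j) = \sqrt{\frac{\lambda^m_j}{\lambda^n_j}}\, Q^n[:,j],
\end{equation*}
where the last equality uses~\eqref{eq:eigenvectors} once more to recognize $\sqrt{(2j+1)\lambda^n_j}\,E^{j,n}\bfpi(L^j) = Q^n[:,j]$. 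Thus elevation maps the $j$-th eigenvector of $M^m$ to a scalar multiple of the $j$-th eigenvector of $M^n$.

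Writing $\widehat{Q}^n$ for the $(n+1)\times(m+1)$ matrix formed from the first $m+1$ columns of $Q^n$, the column identities above assemble into the single matrix equation $E^{m,n} Q^m = \widehat{Q}^n \left(\Sigma^{m,n}\right)^{1/2}$, where $\left(\Sigma^{m,n}\right)^{1/2} = \diag\left(\sqrt{\lambda^m_j/\lambda^n_j}\right)_{j=0}^m$. Multiplying by the transpose and using that the columns of $Q^n$ are orthonormal, so that $\left(\widehat{Q}^n\right)^T \widehat{Q}^n$ is the $(m+1)\times(m+1)$ identity, gives
\begin{equation*}
\left(Q^m\right)^T \left(E^{m,n}\right)^T E^{m,n} Q^m = \left(\Sigma^{m,n}\right)^{1/2}\left(\widehat{Q}^n\right)^T \widehat{Q}^n \left(\Sigma^{m,n}\right)^{1/2} = \Sigma^{m,n}.
\end{equation*}
Since $Q^m$ is orthogonal, conjugating by $Q^m$ produces~\eqref{eq:ETEeigs}.

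I expect the only genuine subtlety to be the composition property $E^{m,n}E^{j,m}=E^{j,n}$; although it is conceptually immediate from viewing elevation as a change of basis for the inclusion of polynomial spaces, a reader wanting a purely algebraic verification would check it directly from~\eqref{eq:elevation}, which reduces to a Vandermonde convolution of binomial coefficients. Everything else is bookkeeping with the explicit eigenvalues~\eqref{eq:eigs} and the orthonormality of the columns of $Q^n$ furnished by the spectral decomposition~\eqref{eq:spectral}. Note that the mass reduction formula~\eqref{eq:massreduction} provides an alternative route---substituting the two spectral decompositions into $M^m = \left(E^{m,n}\right)^T M^n E^{m,n}$---but the elevation-of-eigenvectors identity above is more direct and avoids inverting $\Lambda^m$.
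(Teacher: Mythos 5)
Your proof is correct, but it takes a genuinely different route from the paper's. The paper argues eigenvector-by-eigenvector through the mass matrices: starting from $\left(E^{m,n}\right)^TE^{m,n}E^{j,m}\bfpi(L^j)$, it inserts $M^n$ via the eigen-relation $E^{j,n}\bfpi(L^j)=\tfrac{1}{\lambda^n_j}M^nE^{j,n}\bfpi(L^j)$, collapses $\left(E^{m,n}\right)^TM^nE^{m,n}$ to $M^m$ using the mass reduction formula~\eqref{eq:massreduction}, and finishes with the eigen-relation for $M^m$, yielding $\tfrac{\lambda^m_j}{\lambda^n_j}E^{j,m}\bfpi(L^j)$. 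You instead establish the factorization $E^{m,n}Q^m=\widehat{Q}^n\left(\Sigma^{m,n}\right)^{1/2}$ --- effectively a singular value decomposition of the elevation matrix, with singular values $\sqrt{\lambda^m_j/\lambda^n_j}$ --- and conclude using only the orthonormality of the columns of $Q^n$, never invoking $M^m$, $M^n$, or~\eqref{eq:massreduction} at all. Both arguments hinge on the composition property $E^{m,n}E^{j,m}=E^{j,n}$, which the paper uses silently (in its first and third lines) and which you correctly identify as the one step requiring independent justification. What your route buys is the intermediate identity $E^{m,n}Q^m=\widehat{Q}^n\left(\Sigma^{m,n}\right)^{1/2}$, which is of independent interest (it exhibits the SVD of $E^{m,n}$ and shows that elevation carries the degree-$m$ Legendre eigenstructure onto the degree-$n$ one); it also makes explicit the final assembly step --- passing from the eigenvector property to the decomposition~\eqref{eq:ETEeigs} via orthogonality of $Q^m$ --- which the paper leaves unstated. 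What the paper's route buys is brevity: it is a five-line computation reusing identities~\eqref{eq:massreduction} and~\eqref{eq:eigenvectors} already in hand, with no need to track normalization constants or form the matrix $\widehat{Q}^n$.
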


\begin{proof}
\begin{align*}
\left(E^{m,n}\right)^TE^{m,n}E^{j,m}\bfpi(L^j) &= \left(E^{m,n}\right)^TE^{j,n}\bfpi(L^j) \\
&= \frac{1}{\lambda^n_j} \left(E^{m,n}\right)^T M^nE^{j,n}\bfpi(L^j) \\
&= \frac{1}{\lambda^n_j} \left(E^{m,n}\right)^T M^nE^{m,n}E^{j,m}\bfpi(L^j) \\
&= \frac{1}{\lambda^n_j} M^mE^{j,m}\bfpi(L^j) \\
&= \frac{\lambda^m_j}{\lambda^n_j} E^{j,m}\bfpi(L^j).
\end{align*}
\end{proof}

\section{Constrained optimization}
\label{sec:optimization}
In this section, we pose optimization problems for finding the Bernstein coefficients of optimal polynomials in $\mathcal{P}^{m,+}$ or $\mathcal{P}^{m,n}$.  In the latter case, we give an exact algorithm based on the KKT conditions for the latter case.
Although the exponential nature of the algorithm makes it impractical for higher degrees, it provides an exact solution process for finding optimal polynomials that can be used as a comparison for other algorithms and demonstrates the role Bernstein structure could play in developing specialized algorithms.

To find the nonnegative polynomial approximation of some $f$ satisfying the hypothesis of Theorem~\ref{thm:pmn}, we first compute the optimal unconstrained approximation $p \in \mathcal{P}^m$ with Bernstein coefficients $\bfp = \bfpi(p)$.  Then, following~\eqref{eq:objective} and~\eqref{eq:normrelation}, we pose the cost functional
\begin{equation}
  d_p(\bfq) = (\bfq-\bfp)^T M^m (\bfq-\bfp).
\end{equation}

We can then pose the approximation problem as a quadratic program with linear inequality constraints:
\begin{equation}
  \label{qp}
  \min_{E^{m,n} \bfq  \geq \bfzero^n} d_p(\bfq) 
\end{equation}
Note that we could also add constraints such as $E^{m,n}\bfq \leq u\bfone^n$, where $\bfone^n$ denotes the vector of ones of length $n+1$, to ensure that that the resulting approximation would satisfy an upper bound of $u$.

Some use cases (e.g. mass conservation) also require the approximating polynomial to preserve the integral average of the original function.  Since
\begin{equation}
\label{eq:integral}
\int_0^1 p(x)dx = \frac{1}{m+1} \sum_{i=0}^m \bfpi(p)_i,
\end{equation}
we can define $h_p(\bfq) = \frac{1}{m+1}\sum_{i=0}^m (\bfpi(p)_i-\bfq_i)$ and consider enforcing $h_p(\bfq) = 0$, in which case we pose the quadratic program with inequality and equality constraints:

\begin{equation}
  \label{qpeq}
  \min_{\stackrel{E^{m,n}\bfq\geq\bfzero^n}{h_p(\bfq) = 0}} d_p(\bfq)
\end{equation}

We can also pose exact nonnegativity via a cone constraint as per~\cite{nesterov2000squared,nie2006shape}, although since we are working with Bernstein polynomials, we must adapt the cone to work on Bernstein coefficients.
Define
\begin{equation}
K^{m,+} = \left\{\bfq\in\mathbb{R}^{m+1}\ :\ \sum_{i=0}^m \bfq_ix^i\geq 0\ \text{for all}\ x\in[0,1]\right\}.
\end{equation} 
Let $S^m$ denote the vector space of $m\times m$ symmetric matrices, let $S^{m,+}$ be the intersection of $S^m$ and the positive semidefinite matrices, and let $H^{m,k}\in S^{m+1}$ be the Hankel matrix given by
\begin{equation}
H^{m,k}_{ij} = \begin{cases} 1, & \text{if}\ i+j=k; \\ 0, & \text{otherwise.} \end{cases}
\end{equation}
Observe that $H^{m,k}$ is the zero matrix if $k<0$ or $k>2m$.
If $m=2\ell$ is even, define the operators $\Omega_0:\mathbb{R}^{m+1}\rightarrow S^{\ell+1}$ and $\Omega_1:\mathbb{R}^{m+1}\rightarrow S^{\ell}$ by
\begin{equation}
\Omega_0(\bfq) = \sum_{k=0}^{2\ell} \bfq_kH^{\ell,k}\quad
\text{and}\quad
\Omega_1(\bfq) = \sum_{k=0}^{2\ell-2} \left[\bfq_{k+1}-\bfq_{k+2}\right]H^{\ell-1,k};
\end{equation}
if $m=2\ell+1$ is odd, define the operators $\Omega_0,\Omega_1:\mathbb{R}^{m+1}\rightarrow S^{\ell+1}$ by
\begin{equation}
\Omega_0(\bfq) = \sum_{k=0}^{2\ell}\bfq_{k+1}H^{\ell,k}\quad
\text{and}\quad
\Omega_1(\bfq) = \sum_{k=0}^{2\ell}\left[ \bfq_k-\bfq_{k+1}\right]H^{\ell,k}.
\end{equation}
Let $\Omega_0^{*}$ and $\Omega_1^{*}$ denote the adjoint of these operators with respect to the inner product $(A,B)=\tr(AB)$ for symmetric matrices.
By~\cite{nesterov2000squared}, we have the following characterization of $K^{m,+}$.

\begin{theorem}[Nesterov]
\label{thm:nesterovcone}
If $m=2\ell$ is even, then
\begin{equation}
K^{m,+} = \{\bfq\in\mathbb{R}^{m+1}\ :\ \bfq = \Omega_0^{*}(A)+\Omega_1^{*}(B)\ \text{for some}\ A\in S^{\ell+1,+}, B\in S^{\ell,+}\};
\end{equation}
if $m=2\ell+1$ is odd, then
\begin{equation}
K^{m,+} = \{\bfq\in\mathbb{R}^{m+1}\ :\ \bfq = \Omega_0^{*}(A)+\Omega_1^{*}(B)\ \text{for some}\ A,B\in S^{\ell+1,+}\}.
\end{equation}
\end{theorem}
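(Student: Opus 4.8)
The plan is to compute the adjoints $\Omega_0^*$ and $\Omega_1^*$ explicitly, and then to recognize that the membership condition on the right-hand side is exactly the classical Markov--Luk\'{a}cs weighted sum-of-squares certificate for nonnegativity on $[0,1]$, expressed through the Gram matrices $A$ and $B$.

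First I would record the elementary pairing identity. For the inner product $(A,B)=\tr(AB)$ and any antidiagonal index $k$, one has $\tr(H^{\ell,k}A)=\sum_{i+j=k}A_{ij}$, the sum of entries of $A$ along the $k$-th antidiagonal. Writing $\bfv(x)=(1,x,\dots,x^{\ell})^T$ for the monomial vector, this is equivalent to the single identity $\sum_k \tr(H^{\ell,k}A)\,x^k = \bfv(x)^T A\,\bfv(x)$. In the even case $m=2\ell$, the definition of $\Omega_0$ then gives $\big(\Omega_0^*(A)\big)_k=\sum_{i+j=k}A_{ij}$, so that $\Omega_0^*(A)$ is precisely the coefficient vector of the polynomial $\bfv(x)^T A\,\bfv(x)$.

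Next I would carry out the longer computation for $\Omega_1$. In the even case, setting $\mathbf{w}(x)=(1,\dots,x^{\ell-1})^T$ and $\tau(x)=\mathbf{w}(x)^T B\,\mathbf{w}(x)$, pairing $\Omega_1(\bfq)$ against $B$ and reindexing the telescoping coefficient differences $\bfq_{k+1}-\bfq_{k+2}$ shows that $\Omega_1^*(B)$ is the coefficient vector of $x(1-x)\tau(x)$. Hence in the even case the condition $\bfq=\Omega_0^*(A)+\Omega_1^*(B)$ says exactly that
\begin{equation*}
\sum_{i=0}^{m}\bfq_i x^i = \sigma_0(x) + x(1-x)\sigma_1(x), \qquad \sigma_0=\bfv^T A\,\bfv,\quad \sigma_1=\mathbf{w}^T B\,\mathbf{w},
\end{equation*}
with $\deg\sigma_0\leq 2\ell$ and $\deg\sigma_1\leq 2\ell-2$. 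The parallel odd-case computation ($m=2\ell+1$) gives that $\Omega_0^*(A)$ and $\Omega_1^*(B)$ are the coefficient vectors of $x\,\bfv(x)^T A\,\bfv(x)$ and $(1-x)\,\bfv(x)^T B\,\bfv(x)$, so that $\bfq=\Omega_0^*(A)+\Omega_1^*(B)$ becomes $\sum_i \bfq_i x^i = x\sigma_0(x)+(1-x)\sigma_1(x)$ with $\sigma_0,\sigma_1$ of degree at most $2\ell$. It then remains to feed these representations into two standard facts: the Gram-matrix characterization of univariate sums of squares states that a polynomial $\sigma$ of degree $2d$ is a sum of squares if and only if $\sigma=\bfv(x)^T A\,\bfv(x)$ for some $A\in S^{d+1,+}$, so letting $A$ (respectively $B$) range over the positive semidefinite cone is the same as letting $\sigma_0$ (respectively $\sigma_1$) range over all sums of squares of the indicated degree; and the Markov--Luk\'{a}cs theorem, which asserts that every polynomial nonnegative on $[0,1]$ of degree $2\ell$ (respectively $2\ell+1$) admits a representation of exactly the form $\sigma_0+x(1-x)\sigma_1$ (respectively $x\sigma_0+(1-x)\sigma_1$) with the stated sum-of-squares factors.

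Sufficiency is immediate, since $\bfv^T A\,\bfv\geq 0$ and the weights $x$, $1-x$, $x(1-x)$ are all nonnegative on $[0,1]$, forcing the represented polynomial into $K^{m,+}$; combining this with the converse identifies the right-hand set with $K^{m,+}$ in each parity. The delicate-but-routine part is the index bookkeeping in the $\Omega_1^*$ computation, where one must track the shifted ranges and the telescoping carefully so that precisely the factor $x(1-x)$ (even case) or $(1-x)$ (odd case) emerges. The genuine mathematical obstacle lies entirely in the converse direction, i.e.\ in the Markov--Luk\'{a}cs representation of nonnegative univariate polynomials; this is the substance of Nesterov's result, and following~\cite{nesterov2000squared} I would cite it rather than reprove it, our own task being only the verification that $\Omega_0^*$ and $\Omega_1^*$ reproduce that representation.
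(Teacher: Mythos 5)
Your proposal is correct, and it is consistent with how the paper handles this statement: the paper gives no proof at all, simply quoting the characterization from Nesterov's work~\cite{nesterov2000squared}, so deferring the hard (converse) direction to Nesterov/Markov--Luk\'{a}cs is exactly the right call. Your adjoint computations agree with the formulas $\Omega_0^{*}(A)_k = \tr(AH^{\ell,k})$, $\Omega_1^{*}(B)_k = \tr(B(H^{\ell-1,k-1}-H^{\ell-1,k-2}))$ (and their odd-degree analogues) that the paper records as ``a straightforward calculation'' immediately after the theorem, and your identification of the range of $\Omega_0^{*}+\Omega_1^{*}$ with the weighted sum-of-squares representations $\sigma_0 + x(1-x)\sigma_1$ (even case) and $x\sigma_0+(1-x)\sigma_1$ (odd case) correctly supplies the bridge between the Hankel/Gram-matrix formalism and the classical Luk\'{a}cs representation that the paper leaves implicit in the citation.
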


A straightforward calculation shows that if $m=2\ell$ is even, then 
\begin{equation}
\Omega_0^{*}(A)_k = \tr(AH^{\ell,k})
\end{equation}
and
\begin{equation}
\Omega_1^{*}(B)_k = \tr(B(H^{\ell-1,k-1}-H^{\ell-1,k-2})).
\end{equation}
Similarly, if $m=2\ell+1$ is odd, then
\begin{equation}
\Omega_0^{*}(A)_k = \tr(AH^{\ell,k-1})
\end{equation}
and
\begin{equation}
\Omega_1^{*}(B)_k = \tr(B(H^{\ell,k}-H^{\ell,k-1})).
\end{equation}
Therefore, we have a description of members of $K^{m,+}$, and so we can consider the optimization problem
\begin{equation}
  \label{qcqp}
\min_{\bfq\in K^{m,+}} d_p(T^m\bfq),
\end{equation}
where $T^m$ denotes the $(m+1)\times (m+1)$ matrix that maps the monomial coefficients of a polynomial $q\in\mathcal{P}^m$ to its corresponding Bernstein coefficients.  This change of basis is triangular, but incredibly ill-conditioned.  On the other hand, the monomial basis is also bad to work with directly.


Although these constrained quadratic programs may be efficiently solved via interior point methods in existing software, with linear constraints we also give a direct algorithm  based on the Karush--Kuhn--Tucker (KKT) conditions~\cite{bazaraa2013kkt} for optimal solutions in nonlinear programming.
We desire to minimize $d_p:\mathbb{R}^{m+1}\rightarrow \mathbb{R}$ subject to linear inequality constraints.  We let $g_i(\bfq) = (E^{m,n}\bfq)_i$ and require $g_i(\bfq)\geq 0$ for $0\leq i\leq n$. The KKT conditions state that there exists $\bfmu\in\mathbb{R}^{n+1}$ such that the optimum $\bfq$ satisfies the stationarity conditions
\begin{equation}
\label{eq:kktstationarity}
\nabla d_p(\bfq)-\sum_{i=0}^n \bfmu_i \nabla g_i(\bfq) = \bfzero^m
\end{equation}
together with the complementary slackness conditions
\begin{equation}
\label{eq:kktcomplementary}
\bfmu_i g_i(\bfq) = 0\quad \text{for each}\quad 0\leq i\leq n
\end{equation}
and the dual feasibility conditions
\begin{equation}
\bfmu_i \geq 0\quad \text{for each}\quad 0\leq i\leq n.
\end{equation}
Such algorithms, although of exponential complexity, do give an exact solution.  They provide a baseline to compare with other optimization algorithms and demonstrate how Bernstein structure may be used within the process.
In Subsection~\ref{ssec:inequality}, we consider the problem of finding $q\in\mathcal{P}^{m,n}$ that solves the inequality-constrained problem, and extend the algorithm to include an equality constraint in Subsection~\ref{ssec:equality}, which only requires a slight modification to~\eqref{eq:kktstationarity}.

\subsection{Finding optimal polynomials via KKT theory}
\label{ssec:inequality}

We begin by minimizing $d_p$ subject to $g_i\geq 0$ for each $0\leq i\leq n$, or equivalently, $E^{m,n} \bfq \geq \bfzero^n$, componentwise. Since
\begin{equation}
\label{eq:objgrad}
\nabla d_p(\bfq) = 2M^m(\bfq-\bfpi(p))
\end{equation}
and
\begin{equation}
\label{eq:ineqgrad}
\nabla g_i(\bfq) = \left( E^{m,n}[i,:]\right)^T,
\end{equation}
the KKT conditions translate to finding $\bfq\in\mathbb{R}^{m+1}$ and $\bfmu\in\mathbb{R}^{n+1}$ that satisfy
\begin{equation}
\label{eq:stationarityneq}
2M^m(\bfq-\bfpi(p))-\left(E^{m,n}\right)^T\bfmu = \bfzero^m,
\end{equation}
\begin{equation}
\label{eq:complementary}
\bfmu_i\left(E^{m,n}\bfq\right)_i=0\quad \text{for each}\quad 0\leq i\leq n,
\end{equation}
and
\begin{equation}
\label{eq:feasibility}
\bfmu_i\geq 0\quad \text{for each}\quad 0\leq i\leq n.
\end{equation}
The complementary slackness conditions give us $2^{n+1}$ cases to consider. For each $J\subseteq\{0,\dots,n\}$, define $I^n_J$ to be the $(n+1)\times (n+1)$ diagonal matrix satisfying
\begin{equation}
\label{eq:Jdiag}
\left(I^n_J\right)_{ii} = \begin{cases} 1, & \text{if}\ i\in J; \\ 0, & \text{if}\ i\not\in J. \end{cases}
\end{equation}
The complementary slackness conditions can then be expressed in the form
\begin{equation}
\label{eq:complementarymat}
I^n_JE^{m,n}\bfq = \bfzero^n\quad \text{and}\quad I^n_{J^c}\bfmu = \bfzero^n,
\end{equation}
where $J^c$ is the relative complement of $J$; that is, $J^c = \{0,\dots, n\}\setminus J$. Therefore, for each $J\subseteq\{0,\dots, n\}$, we can consider the block matrix equation
\begin{equation}
\label{eq:blockneq}
\begin{pmatrix}
M^m & -\frac{1}{2}\left(E^{m,n}\right)^T \\
I^n_JE^{m,n} & I^n_{J^c}
\end{pmatrix}
\begin{pmatrix}
\bfq \\
\bfmu
\end{pmatrix}
=
\begin{pmatrix}
M^m\bfpi(p) \\
\bfzero^n
\end{pmatrix}.
\end{equation}
Since $M^m$ is invertible, we can use the Schur complement to eliminate the variable $\bfq$ and obtain the equation
\begin{equation}
\label{eq:mu}
\left(I^n_{J^c}+\frac{1}{2}I^n_JE^{m,n}\left(M^m\right)^{-1}\left(E^{m,n}\right)^T\right)\bfmu = -I^n_JE^{m,n}\bfpi(p).
\end{equation}
By (\ref{eq:spectral}), 
\begin{equation}
\label{eq:EMinvET}
E^{m,n}\left(M^m\right)^{-1}\left(E^{m,n}\right)^T = U^{m,n}\left(U^{m,n}\right)^T,
\end{equation}
where $U^{m,n}$ is the $(n+1)\times (m+1)$ matrix with columns given by
\begin{equation}
\label{eq:U}
U^{m,n}[:,j] = \sqrt{2j+1}E^{j,n}\bfpi(L^j).
\end{equation}
By defining $W^{m,n} = \frac{1}{2}U^{m,n}\left(U^{m,n}\right)^T$, we can compactly express (\ref{eq:mu}) as
\begin{equation}
\label{eq:murefined}
\left(I^n_{J^c}+I^n_JW^{m,n}\right)\bfmu = -I^n_JE^{m,n}\bfpi(p).
\end{equation}
We remark that $W^{m,n}$ is a rank $m+1$ matrix with nonzero eigenvalues given by $\left(2\lambda^n_j\right)^{-1}$ for $0\leq j\leq m$.

Equation (\ref{eq:murefined}) corresponds to the $|J|\times |J|$ system
\begin{equation}
\label{eq:mufinalneq}
\sum_{j\in J} W^{m,n}_{ij}\bfmu_j = -(E^{m,n}\bfpi(p))_i\quad \text{for each}\quad i\in J.
\end{equation}
We want to find a set $J\subseteq\{0,\dots,n\}$ for which a solution of (\ref{eq:mufinalneq}) exists and satisfies the dual feasibility conditions. In such a case, we observe that (\ref{eq:stationarityneq}) implies that
\begin{equation}
\label{eq:elevatedsolneq}
E^{m,n}\bfq = W^{m,n}\bfmu+E^{m,n}\bfpi(p),
\end{equation}
and so we can check whether the inequality constraints are satisfied. If both the inequality constraints and the dual feasibility conditions are satisfied, then the desired solution is given by
\begin{align*}
\bfq &= \left(\left(E^{m,n}\right)^TE^{m,n}\right)^{-1}\left(E^{m,n}\right)^T\left(W^{m,n}\bfmu+E^{m,n}\bfpi(p)\right) \\
&= U^{m,m}\diag(\lambda^n_0, \dots, \lambda^n_m)\left(U^{m,n}\right)^T\left(W^{m,n}\bfmu+E^{m,n}\bfpi(p)\right),
\end{align*}
where the last equality follows from Theorem~\ref{thm:ETEeigs}.

The previous discussion is summarized in Algorithm~\ref{alg:inequality}. We remark that since solutions to the constrained optimization problem are unique, we can terminate the algorithm once the inequality constraints and dual feasibility conditions are satisfied.

\begin{algorithm}
\caption{Minimizes $d_p(\bfq)$ subject to $g_i(\bfq)\geq 0$ for each $0\leq i\leq n$.}
\label{alg:inequality}
\begin{algorithmic}
\FOR{$J\subset\{0,\dots, n\}$}
\FOR{$i\in J$}
\STATE{$\bfb_i\gets -(E^{m,n}\bfpi(p))_i$}
\FOR{$j\in J$}
\STATE{$A_{ij}\gets W^{m,n}_{ij}$}
\ENDFOR
\ENDFOR
\IF{$\rank(A) = |J|$}
\STATE{$\bfx\gets A^{-1}\bfb$}
\IF{$\bfx\geq\bfzero^{|J|-1}$}
\FOR{$i\gets 0,n$}
\IF{$i\in J$}
\STATE{$\bfmu_i\gets \bfx_i$}
\ELSE
\STATE{$\bfmu_i\gets 0$}
\ENDIF
\ENDFOR
\STATE{$\bfy\gets W^{m,n}\bfmu+E^{m,n}\bfpi(p)$}
\IF{$\bfy\geq\bfzero^n$}
\STATE{$\bfq\gets U^{m,m}\diag(\lambda^n_0,\dots,\lambda^n_m)\left(U^{m,n}\right)^T\bfy$}
\RETURN{$\bfq$}
\ENDIF
\ENDIF
\ENDIF
\ENDFOR
\end{algorithmic}
\end{algorithm}

Since we are iterating over all subsets $J$ of $\{0,\dots,n\}$, the algorithm has exponential complexity. Each iteration requires $2|J|^3/3$ operations to solve for $\bfx$, $(n+1)(2n+2m+3)$ operations to form the vector $\bfy$, and $(n+1)(2m+1)+2(m+1)^2$ operations to form the vector $\bfq$. The vector $\bfy$ is only formed if the entries in $\bfx$ are nonnegative, and the vector $\bfq$ is only formed on the final iteration. Let $N$ denote the number of times the vector $\bfy$ is formed. Even though the algorithm usually terminates early, this results in a worst-case operation count of
\begin{equation}
\label{eq:ineqops0}
N(n+1)(2n+2m+3)+(n+1)(2m+1)+2(m+1)^2+\sum_{J\subset\{0,\dots, n\}}\frac{2|J|^3}{3},
\end{equation}
which can be simplified to
\begin{equation}
\label{eq:ineqops}
N(n+1)(2n+2m+3)+(n+1)(2m+1)+2(m+1)^2+\frac{2^{n-1}}{3}(n+1)^2(n+4).
\end{equation}

\subsection{Enforcing mass preservation}
\label{ssec:equality}

Since
\begin{equation}
\label{eq:eqgrad}
\nabla h_p(\bfq) = -\frac{1}{m+1}\bfone^m,
\end{equation}
we can modify (\ref{eq:stationarityneq}) and find $\bfq\in\mathbb{R}^{m+1}$, $\bfmu\in\mathbb{R}^{n+1}$, and $\nu\in\mathbb{R}$ that satisfy
\begin{equation}
\label{eq:stationarityeq}
2M^m(\bfq-\bfpi(p))-\left(E^{m,n}\right)^T\bfmu - \frac{\nu}{m+1}\bfone^m = \bfzero^m
\end{equation}
together with (\ref{eq:complementary}) and (\ref{eq:feasibility}). Similar to Subsection~\ref{ssec:inequality}, for each $J\subseteq\{0,\dots, n\}$, we consider the block matrix equation
\begin{equation}
\label{eq:blockeq}
\begin{pmatrix}
M^m & -\frac{1}{2}\left(E^{m,n}\right)^T & -\frac{1}{2(m+1)}\bfone^m \\
I^n_JE^{m,n} & I^n_{J^c} & \bfzero^n \\
\frac{1}{m+1}\left(\bfone^m\right)^T & \left(\bfzero^n\right)^T & 0 \\
\end{pmatrix}
\begin{pmatrix}
\bfq \\
\bfmu \\
\nu
\end{pmatrix}
=
\begin{pmatrix}
M^m\bfpi(p) \\
\bfzero^n \\
\frac{1}{m+1}\left(\bfone^m\right)^T \bfpi(p)
\end{pmatrix}.
\end{equation}
We express this as the augmented matrix
\begin{equation}
\label{eq:augment0}
\begin{pmatrix}
M^m & -\frac{1}{2}\left(E^{m,n}\right)^T & -\frac{1}{2(m+1)}\bfone^m & M^m\bfpi(p) \\
I^n_JE^{m,n} & I^n_{J^c} & \bfzero^n & \bfzero^n \\
\frac{1}{m+1}\left(\bfone^m\right)^T & \left(\bfzero^n\right)^T & 0 & \frac{1}{m+1}\left(\bfone^m\right)^T\bfpi(p)
\end{pmatrix}.
\end{equation}
Since $\bfone^m$ is an eigenvector of $M^m$ corresponding to the eigenvalue $\frac{1}{m+1}$ and $E^{m,n}\bfone^m=\bfone^n$, we can reduce the above system to
\begin{equation}
\label{eq:augment1}
\begin{pmatrix}
M^m & -\frac{1}{2}\left(E^{m,n}\right)^T & -\frac{1}{2(m+1)}\bfone^m & M^m\bfpi(p) \\
\bfzero^n\left(\bfzero^m\right)^T & I^n_{J^c}+I^n_J\left(W^{m,n}-\frac{1}{2}\bfone^n\left(\bfone^n\right)^T\right) & \bfzero^n & -I^n_JE^{m,n}\bfpi(p) \\
\left(\bfzero^m\right)^T & \left(\bfone^n\right)^T & 1 & 0
\end{pmatrix}.
\end{equation}
Therefore, we search for $J\subseteq\{0,\dots,n\}$ for which a solution of
\begin{equation}
\label{eq:mufinaleq}
\sum_{j\in J} \left(W^{m,n}_{ij}-\frac{1}{2}\right)\bfmu_j = -(E^{m,n}\bfpi(p))_i\quad \text{for each}\quad i\in J
\end{equation}
exists and satisfies the dual feasibility conditions. In such a case, we have that
\begin{equation}
\label{eq:nu}
\nu = -\sum_{i=0}^n \bfmu_i.
\end{equation}
Since (\ref{eq:stationarityeq}) implies that
\begin{equation}
\label{eq:elevatedsoleq}
E^{m,n}\bfq = W^{m,n}\bfmu+\frac{\nu}{2}\bfone^n+E^{m,n}\bfpi(p),
\end{equation}
we can check whether the inequality constraints are satisfied. If both the inequality constraints and dual feasibility conditions are satisfied, then the desired solution is given by
\begin{equation}
\label{eq:q}
\bfq = U^{m,m}\diag(\lambda^n_0,\dots,\lambda^n_m)\left(U^{m,n}\right)^T\left(W^{m,n}\bfmu+\frac{\nu}{2}\bfone^n+E^{m,n}\bfpi(p)\right).
\end{equation}

The previous discussion is summarized in Algorithm~\ref{alg:equality}. Similar to Subsection~\ref{ssec:inequality}, the algorithm terminates once the inequality constraints and dual feasibility conditions are satisfied.

\begin{algorithm}
\caption{Minimizes $d_p(\bfq)$ subject to $g_i(\bfq)\geq 0$ for each $0\leq i\leq n$ and $h_p(\bfq)=0$.}
\label{alg:equality}
\begin{algorithmic}
\FOR{$J\subset\{0,\dots, n\}$}
\FOR{$i\in J$}
\STATE{$\bfb_i\gets -(E^{m,n}\bfpi(p))_i$}
\FOR{$j\in J$}
\STATE{$A_{ij}\gets W^{m,n}_{ij}-1/2$}
\ENDFOR
\ENDFOR
\IF{$\rank(A) = |J|$}
\STATE{$\bfx\gets A^{-1}\bfb$}
\IF{$\bfx\geq\bfzero^{|J|-1}$}
\FOR{$i\gets 0,n$}
\IF{$i\in J$}
\STATE{$\bfmu_i\gets \bfx_i$}
\ELSE
\STATE{$\bfmu_i\gets 0$}
\ENDIF
\ENDFOR
\STATE{$\nu\gets \sum_{i=0}^n \bfmu_i$}
\STATE{$\bfy\gets W^{m,n}\bfmu-\frac{\nu}{2}\bfone^{n}+E^{m,n}\bfpi(p)$}
\IF{$\bfy\geq\bfzero^n$}
\STATE{$\bfq\gets U^{m,m}\diag(\lambda^n_0,\dots,\lambda^n_m)\left(U^{m,n}\right)^T\bfy$}
\RETURN{$\bfq$}
\ENDIF
\ENDIF
\ENDIF
\ENDFOR
\end{algorithmic}
\end{algorithm}

We remark that Algorithm~\ref{alg:inequality} and Algorithm~\ref{alg:equality} can be combined by introducing a variable $\delta$ that is equal to 1 to enforce the equality constraint and 0 otherwise. We also remark that these algorithms are derived by combining the standard KKT algorithm with Bernstein structure.

We summarize the discussions in Subsection~\ref{ssec:inequality} and Subsection~\ref{ssec:equality} in the following theorem.

\begin{theorem}
\label{thm:opcount}
Given a polynomial $p\in\mathcal{P}^m$, Algorithm~\ref{alg:inequality} exactly computes the Bernstein coefficients of the unique polynomial $q\in\mathcal{P}^{m,n}$ that minimizes the quantity $\|p-q\|_{L^2}$ in at most
\begin{equation}
\label{eq:ineqopcount}
M(n+1)(2n+2m+3)+(n+1)(2m+1)+2(m+1)^2+\frac{2^{n-1}}{3}(n+1)^2(n+4)
\end{equation}
operations, and Algorithm~\ref{alg:equality} exactly computes the Bernstein coefficients of the unique polynomial $q\in\mathcal{P}^{m,n}$ that minimizes the quantity $\| p-q\|_{L^2}$ subject to $\int_0^1 p(x)dx = \int_0^1 q(x)dx$ in at most
\begin{equation}
\label{eq:eqopcount}
N[2(n+1)(n+m+2)+n]+(n+1)(2m+1)+2(m+1)^2+\frac{2^{n-1}}{3}(n+1)^2(n+4)
\end{equation}
operations, where $M$ and $N$ denote the number of times the systems (\ref{eq:mufinalneq}) and (\ref{eq:mufinaleq}) have nonnegative solutions, resepctively.
\end{theorem}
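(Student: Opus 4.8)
The plan is to separate the claim into two parts: (i) \emph{correctness}, that each algorithm returns the unique minimizer over $\mathcal{P}^{m,n}$ (with the mass constraint adjoined, for Algorithm~\ref{alg:equality}), and (ii) the \emph{operation counts}~\eqref{eq:ineqopcount} and~\eqref{eq:eqopcount}. Correctness rests on the observation that both problems are convex quadratic programs: the objective $d_p(\bfq)=(\bfq-\bfpi(p))^TM^m(\bfq-\bfpi(p))$ is convex because $M^m$ is symmetric positive definite (it is a Gram matrix, cf.~\eqref{eq:innprod}), and the feasible sets are closed and convex by Proposition~\ref{prop:closestelev} (intersected, in the equality case, with the hyperplane $h_p(\bfq)=0$). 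For a convex program with affine constraints the KKT conditions~\eqref{eq:stationarityneq}--\eqref{eq:feasibility} are both necessary and sufficient for global optimality, while Theorem~\ref{thm:pmn} guarantees the minimizer is unique. Thus it suffices to show that the enumeration over active sets is guaranteed to meet a KKT point and that the returned $\bfq$ is reconstructed correctly.

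For the correctness step I would argue as follows. Complementary slackness~\eqref{eq:complementary} forces $\bfmu$ to be supported on the set $J$ of active constraints, which is exactly the case structure encoded by~\eqref{eq:complementarymat}; eliminating $\bfq$ by the Schur complement reduces stationarity to the $|J|\times|J|$ system~\eqref{eq:mufinalneq} with coefficient matrix $A=W^{m,n}[J,J]$. The key structural fact is that
\begin{equation*}
W^{m,n}[J,J]=\tfrac12\,E^{m,n}[J,:]\,(M^m)^{-1}\,(E^{m,n}[J,:])^T,
\end{equation*}
so, since $(M^m)^{-1}$ is positive definite, $A$ is invertible precisely when the active-constraint gradients $\{E^{m,n}[i,:]:i\in J\}$ are linearly independent. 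Because the primal optimum exists and is unique, the polyhedron of valid multipliers is nonempty and, lying in the nonnegative orthant, possesses a basic (vertex) point whose support $J$ carries linearly independent gradients; for this $J$ the test $\rank(A)=|J|$ passes, \eqref{eq:mufinalneq} has a unique solution, and dual feasibility~\eqref{eq:feasibility} holds. Forming $E^{m,n}\bfq$ via~\eqref{eq:elevatedsolneq} and checking $\bfy\geq\bfzero^n$ verifies primal feasibility; when both checks pass all KKT conditions hold, so by sufficiency and uniqueness the candidate is the minimizer. Finally, recovering the degree-$m$ coefficients from $E^{m,n}\bfq$ is the least-squares solve~\eqref{eq:lstsq}, which through Proposition~\ref{thm:ETEeigs} collapses to the returned expression $U^{m,m}\diag(\lambda^n_0,\dots,\lambda^n_m)(U^{m,n})^T\bfy$.

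Algorithm~\ref{alg:equality} is handled identically once the equality multiplier $\nu$ (unrestricted in sign) is incorporated. Using that $\bfone^m$ is the eigenvector of $M^m$ for eigenvalue $1/(m+1)$ and that $E^{m,n}\bfone^m=\bfone^n$, the augmented system~\eqref{eq:augment0} reduces to~\eqref{eq:augment1}, whose inner block yields~\eqref{eq:mufinaleq} (the shift $W^{m,n}_{ij}-\tfrac12$ arising from eliminating $\nu$) together with $\nu=-\sum_i\bfmu_i$. The feasible region is still closed and convex, so the same sufficiency-plus-uniqueness reasoning applies verbatim.

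For the operation counts the work is routine accounting. Per subset $J$ the dominant cost is the solve of~\eqref{eq:mufinalneq}, namely $\tfrac{2}{3}|J|^3$ operations; summing over all $J\subseteq\{0,\dots,n\}$ invokes the identity $\sum_{k=0}^{N}\binom{N}{k}k^3=2^{N-3}N^2(N+3)$ with $N=n+1$, producing the term $\tfrac{2^{n-1}}{3}(n+1)^2(n+4)$. To this I add $M$ (resp.\ $N$) copies of the cost of forming $\bfy$, which is built only when dual feasibility passes, and a single formation of the returned $\bfq$; tallying the matrix--vector products in $W^{m,n}\bfmu+E^{m,n}\bfpi(p)$ (plus, in the equality case, the $\nu$-summation and the additional update $-\tfrac{\nu}{2}\bfone^n$) yields the remaining terms of~\eqref{eq:ineqopcount} and~\eqref{eq:eqopcount}. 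The hard part is the correctness step, and within it the guarantee that the rank-restricted enumeration never overshoots the optimum: one must argue that a valid KKT multiplier with \emph{linearly independent} active gradients always exists, so that some enumerated $J$ with invertible $A$ reproduces the unique minimizer even under constraint degeneracy.
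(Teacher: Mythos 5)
Your proposal takes essentially the same route as the paper, whose own ``proof'' of this theorem is exactly the preceding discussion: the KKT conditions, the subset-by-subset Schur-complement reduction to the $|J|\times|J|$ system \eqref{eq:mufinalneq} with matrix $W^{m,n}[J,J]$, recovery of $\bfq$ via Proposition~\ref{thm:ETEeigs}, and the same operation tally in which the sum $\sum_{J}\tfrac{2}{3}|J|^3$ collapses via the binomial identity to $\tfrac{2^{n-1}}{3}(n+1)^2(n+4)$. The one point where you go beyond the paper is the vertex-multiplier argument showing that some enumerated $J$ with linearly independent active gradients (hence $\rank(A)=|J|$) must yield the optimum even under constraint degeneracy; the paper leaves this termination guarantee implicit, so your addition is a strengthening rather than a different approach.
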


\section{Higher dimensions}
\label{sec:higher}
Bernstein polynomials extend naturally to give a basis for multivariate polynomials of total degree $n$.  Additionally, Bernstein polynomials form a geometrically decomposed partition of unity on the $d$-simplex and maintain the convex hull property.  Analogs of Proposition~\ref{prop:closestpos} and Proposition~\ref{prop:closestelev} hold in the multivariate case, and so we can extend our analysis from Section~\ref{sec:optimization} to nonnegative polynomials on the $d$-simplex. In this section, we discuss the generalization of Algorithm~\ref{alg:inequality} and Algorithm~\ref{alg:equality} to higher dimensions.

Although the linear inequality constraints for nonnegativity used in~\eqref{qp} transfer over naturally to the multivariate case, an exact characterization of nonnegativity via a quadratic or cone constraint is likely not possible.
Such quadratic constraints typically imply that a polynomial is a sum-of-squares.  This can typically be decided via semidefinite programmign in polynomial time, while the general problem if determining nonnegativity is NP-hard in general~\cite{lasserre2007sum}.  Approximating nonnegative polynomials with sums of squares may be possible, but introduces complications beyond the scope of the relatively simple quadratic programs we consider in this paper.

For an integer $d\geq 1$, let $S_d$ be the unit right simplex in $\mathbb{R}^d$. Let $\{\bfv_i\}_{i=0}^d\subset \mathbb{R}^d$ be the vertices of $S_d$, and let $\{\bfb_i\}_{i=0}^d$ denote the barycentric coordinates of $S_d$. Each $\bfb_i$ is an affine map from $\mathbb{R}^d$ to $\mathbb{R}$ such that
\begin{equation}
\label{eq:barycentric}
\bfb_i(\bfv_j) = \begin{cases} 1, & \text{if}\ i=j; \\ 0, & \text{if}\ i\neq j; \end{cases}
\end{equation}
for each vertex $\bfv_j$. Each $\bfb_i$ is nonnegative on $S_d$, and
\begin{equation}
\label{eq:partitionofunity}
\sum_{i=0}^d \bfb_i = 1.
\end{equation}
A multiindex $\bfalpha$ of length $d+1$ is a $(d+1)$-tuple of nonnegative integers, written
\begin{equation}
\label{eq:multiindex}
\bfalpha = (\bfalpha_0,\dots,\bfalpha_d).
\end{equation}
The order of $\bfalpha$, denoted $|\bfalpha|$, is given by
\begin{equation}
\label{eq:order}
|\bfalpha| = \sum_{i=0}^d \bfalpha_i.
\end{equation}
The factorial $\bfalpha!$ of a multiindex $\bfalpha$ is defined by
\begin{equation}
\label{eq:factorial}
\bfalpha! = \prod_{i=0}^d \bfalpha_i.
\end{equation}
The Bernstein polynomials of degree $n$ on the $d$-simplex $S_d$ are defined by
\begin{equation}
B^n_{\bfalpha} = \frac{n!}{\bfalpha!} \prod_{i=0}^d \bfb_i^{\bfalpha_i}.
\end{equation}
The complete set of Bernstein polynomials $\{B^n_{\bfalpha}\}_{|\bfalpha|=n}$ form a basis for polynomials in $d$ variables of total degree at most $n$.

If $m\leq n$, then any polynomial expressed in the basis $\{B^{m}_{\bfalpha}\}_{|\bfalpha|=m}$ can also be expressed in the basis $\{B^n_{\bfalpha}\}_{|\bfalpha|=n}$. We denote by $E^{d,m,n}$ the $\binom{d+n}{d}\times\binom{d+m}{d}$ matrix that maps the coefficients of the degree $m$ representation to the coefficients of the degree $n$ representation. The matrix $E^{d,m,n}$ is sparse and can be applied matrix-free~\cite{kirby2017fast}, if desired.

The Bernstein mass matrix for polynomials in $d$ variables of total degree $n$ is the $\binom{d+n}{d}\times \binom{d+n}{d}$ matrix $M^{d,n}$ whose entries are given by 
\begin{equation}
\label{eq:bernsteindmass} 
M^{d,n}_{\bfalpha\bfbeta} = \int_{S^d} B^n_{\bfalpha}(\bfx)B^n_{\bfbeta}(\bfx) d\bfx. 
\end{equation} 
It was shown in~\cite{kirby2011fast} that the entries can be exactly computed as
\begin{equation} 
\label{eq:dmassentries}
M^{d,n}_{\bfalpha\bfbeta} = \frac{(n!)^2(\bfalpha+\bfbeta)!}{\bfalpha!\bfbeta!(2n+d)!}. 
\end{equation}

Let $\mathcal{L}^{d,0}$ denote the space of constant polynomials, and for each integer $j\geq 1$, let $\mathcal{L}^{d,j}$ denote the space of $d$-variate polynomials that are $L^2$ orthogonal to all polynomials of degree less than $j$. In~\cite{farouki2003orthogonal}, a dimensionally recursive algorithm is given for constructing the Bernstein form of an orthogonal basis for $\mathcal{L}^{d,j}$. Therefore, for each nonnegative integer $j$, we can form the $\binom{d+j}{d}\times \binom{d+j-1}{d-1}$ matrix $L^{d,j}$ whose columns are the Bernstein coefficients of an orthogonal basis for $\mathcal{L}^{d,j}$. The following characterization of the eigenvalues of $M^{d,n}$ can be found in~\cite{kirby2017fast}.

\begin{theorem}
\label{thm:dmasseigs}
The eigenvalues of $M^{d,n}$ are $\{\lambda^{d,n}_j\}_{j=0}^n$, where
\begin{equation}
\label{eq:dmasseigs}
\lambda^{d,n}_j = \frac{(n!)^2}{(n+j+d)!(n-j)!}
\end{equation}
is an eigenvalue of multiplicity $\binom{d+j-1}{d-1}$, and the eigenvectors corresponding to $\lambda^{d,n}_j$ are the columns of $E^{d,j,n}L^{d,j}$.
\end{theorem}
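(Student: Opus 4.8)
The plan is to follow the template behind the univariate decomposition \eqref{eq:spectral}--\eqref{eq:eigenvectors}: first verify that each column of $E^{d,j,n}L^{d,j}$ is an eigenvector of $M^{d,n}$ with eigenvalue $\lambda^{d,n}_j$, and then use a dimension count to show that these vectors exhaust the spectrum with exactly the stated multiplicities. Fix $j$ and let $\ell\in\mathcal{L}^{d,j}$ be one of the orthogonal polynomials whose degree-$j$ Bernstein coefficients form a column $\bfpi(\ell)$ of $L^{d,j}$, so that $E^{d,j,n}\bfpi(\ell)$ is the degree-$n$ Bernstein coefficient vector of $\ell$.

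Since $M^{d,n}$ acts as a Gram matrix exactly as in \eqref{eq:innprod}, the $\bfalpha$-th entry of $M^{d,n}E^{d,j,n}\bfpi(\ell)$ is $\int_{S_d}B^n_\bfalpha\,\ell\,d\bfx$, so the eigenvector claim is equivalent to the scalar identity $\int_{S_d}B^n_\bfalpha\,\ell\,d\bfx=\lambda^{d,n}_j\bigl(E^{d,j,n}\bfpi(\ell)\bigr)_\bfalpha$ for every $|\bfalpha|=n$. Equivalently, the operator $p\mapsto\sum_{|\bfalpha|=n}\bigl(\int_{S_d}B^n_\bfalpha\,p\,d\bfx\bigr)B^n_\bfalpha$ on polynomials---which is $M^{d,n}$ expressed through the Bernstein basis isomorphism---must act as multiplication by $\lambda^{d,n}_j$ on $\mathcal{L}^{d,j}$; this operator is a scalar multiple of the Bernstein--Durrmeyer operator on the simplex, whose eigenspaces are classically the orthogonal spaces $\mathcal{L}^{d,j}$. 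To pin the eigenvalue down concretely, I would use the orthogonal decomposition of the degree-$n$ polynomials into $\bigoplus_{k=0}^n\mathcal{L}^{d,k}$ to reduce $\int_{S_d}B^n_\bfalpha\,\ell$ to $\langle P_jB^n_\bfalpha,\ell\rangle$, where $P_j$ is the $L^2$-projection onto $\mathcal{L}^{d,j}$, and then fix the constant by testing on one convenient $\ell$ via the explicit moments \eqref{eq:dmassentries}; alternatively, one can induct on $d$ through the dimensional recursion of Farouki, Goodman, and Sauer~\cite{farouki2003orthogonal} for $L^{d,j}$, with the univariate identity behind \eqref{eq:eigenvectors} as the base case. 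I expect this step---forcing the precise value $\lambda^{d,n}_j$ rather than merely the invariance of $\mathcal{L}^{d,j}$---to be the main obstacle.

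Once the eigenvalue identity holds, completeness and multiplicities follow formally. The columns of $E^{d,j,n}L^{d,j}$ are linearly independent, since $L^{d,j}$ has full column rank and $E^{d,j,n}$ is injective, so $\lambda^{d,n}_j$ occurs with multiplicity at least $\dim\mathcal{L}^{d,j}=\binom{d+j-1}{d-1}$. The hockey-stick identity gives $\sum_{j=0}^n\binom{d+j-1}{d-1}=\binom{d+n}{d}$, which is exactly the order of $M^{d,n}$, so these eigenvectors already form a complete basis and the multiplicities are exactly $\binom{d+j-1}{d-1}$. Finally, $\lambda^{d,n}_{j+1}/\lambda^{d,n}_j=(n-j)/(n+j+d+1)\in(0,1)$ for $0\le j<n$, so the eigenvalues are strictly decreasing in $j$ and hence distinct; combined with the $L^2$-orthogonality of the distinct spaces $\mathcal{L}^{d,j}$, this confirms that the listed eigenvectors give the full spectral decomposition of $M^{d,n}$.
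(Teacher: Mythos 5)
First, a point of order: the paper itself does not prove this theorem --- it is imported from~\cite{kirby2017fast} (``The following characterization of the eigenvalues of $M^{d,n}$ can be found in...''), so the comparison is against that cited source rather than an internal argument. Your proposal reconstructs what is essentially the standard (and, in substance, the cited) route: observe that $M^{d,n}$ is the Bernstein-basis matrix of the operator $T:p\mapsto\sum_{|\bfalpha|=n}\left(\int_{S_d}B^n_\bfalpha\, p\,d\bfx\right)B^n_\bfalpha$, which is a scalar multiple of the Bernstein--Durrmeyer operator on the simplex; use its spectral theory (Derriennic) to get the eigenvalue identity on each $\mathcal{L}^{d,j}$; and finish by counting dimensions. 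Your completeness and multiplicity argument is airtight as written: columnwise independence of $E^{d,j,n}L^{d,j}$, distinctness of the eigenvalues via the ratio $(n-j)/(n+j+d+1)$, and the identity $\sum_{j=0}^n\binom{d+j-1}{d-1}=\binom{d+n}{d}$ matching the order of the matrix.

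The caveat is that all the real content sits in your sentence ``whose eigenspaces are classically the orthogonal spaces $\mathcal{L}^{d,j}$.'' If citing that classical result is permitted, your proof is complete, and the exact value $\lambda^{d,n}_j$ comes with it, so no testing on a convenient $\ell$ is needed. If you intend to prove it yourself, be aware that for $d>1$ the easy part is only \emph{invariance}: $T$ is self-adjoint in $L^2(S_d)$ (immediate, since $\langle Tp,q\rangle=\sum_{|\bfalpha|=n}\langle B^n_\bfalpha,p\rangle\langle B^n_\bfalpha,q\rangle$) and maps polynomials of total degree at most $k$ to polynomials of total degree at most $k$ for each $k\le n$ (a genuine computation), whence $T$ preserves each $\mathcal{L}^{d,j}$. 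But $\dim\mathcal{L}^{d,j}=\binom{d+j-1}{d-1}>1$ in general, so invariance alone does not make $T$ act as a \emph{scalar} there; a priori its restriction to $\mathcal{L}^{d,j}$ could have several eigenvalues, in which case the columns of $E^{d,j,n}L^{d,j}$ would not all be eigenvectors. What forces scalar action is a leading-term computation: $Tp-\lambda^{d,n}_k\,p$ has degree at most $k-1$ for every $p$ of degree $k$; combined with invariance this gives, for $\ell\in\mathcal{L}^{d,j}$, that $T\ell-\lambda^{d,n}_j\ell$ lies in $\mathcal{L}^{d,j}$ and has degree at most $j-1$, hence is orthogonal to itself and vanishes. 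Your fallback of ``testing on one convenient $\ell$'' cannot substitute for this step, since it pins down a constant only \emph{after} one knows the restriction is a constant multiple of the identity. You flagged this correctly as the main obstacle; it is exactly where the cited proof does its work.
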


For each $0\leq j\leq n$, define $Q^{d,n,j}$ to be the $\binom{d+n}{d}\times \binom{d+j-1}{d-1}$ matrix whose columns are given by
\begin{equation}
\label{eq:Qd}
Q^{d,n,j}[:,k] = \frac{1}{\| L^{d,j}[:,k] \|_{M^{d,j}}} \left(E^{d,j,n}L^{d,j}\right)[:,k].
\end{equation}
Then Theorem~\ref{thm:dmasseigs} implies that
\begin{equation}
\label{eq:EdMinvET}
E^{d,m,n}\left(M^{d,m,m}\right)^{-1}\left(E^{d,m,n}\right)^T = U^{d,m,n}\left(U^{d,m,n}\right)^T,
\end{equation}
where $U^{d,m,n}$ is $\binom{d+n}{d}\times \binom{d+m}{d}$ matrix given by
\begin{equation}
\label{eq:Ud}
U^{d,m,n} = 
\begin{pmatrix}
Q^{d,n,0} & | & Q^{d,n,1} & | & \cdots & | & Q^{d,n,m}
\end{pmatrix}.
\end{equation}

We now turn our attention to the constrained optimization problem. Define $W^{d,m,n} = \frac{1}{2}U^{d,m,n}\left(U^{d,m,n}\right)^T$. Set $\delta = 1$ to enforce the equality constraints; otherwise, set $\delta=0$. Following similar reasoning as Subsection~\ref{ssec:inequality} and Subsection~\ref{ssec:equality}, we search for a set $J\subset\left\{0,\dots,\binom{d+n}{d}\right\}$ for which the solution of
\begin{equation}
\label{eq:mufinaleqd}
\sum_{j\in J} \left(W^{d,m,n}_{ij}-\frac{d!\delta}{2}\right)\bfmu_j = -(E^{d,m,n}\bfpi(p))_i\quad \text{for each}\quad i\in J
\end{equation}
satisfies the dual feasibility conditions. In such a case, we have that
\begin{equation}
\label{eq:elevatedsoleqd}
E^{d,m,n}\bfq = W^{d,m,n}\bfmu+\frac{\delta\nu}{2}\bfone^{d,n}+E^{d,m,n}\bfpi(p),
\end{equation}
where
\begin{equation}
\label{eq:nud}
\nu = -d!\sum_{|\bfalpha|=n} \bfmu_{\bfalpha},
\end{equation}
and so we can check whether the inequality constraints are satisfied. If both the inequality constraints and dual feasibility conditions are satisfied, then the desired solution is given by
\begin{equation}
\label{eq:qd}
\bfq = U^{d,m,m}\diag(\lambda^{d,n}_0,\dots,\lambda^{d,n}_m)\left(U^{d,m,n}\right)^T\left(W^{d,m,n}\bfmu+\frac{\delta\nu}{2}\bfone^{d,n}+E^{d,m,n}\bfpi(p)\right),
\end{equation}
where each eigenvalue $\lambda^{d,n}_j$ is repeated according to its multiplicity.

The previous discussion is summarized in Algorithm~\ref{alg:optimization}. Similar to Section~\ref{sec:optimization}, the algorithm terminates once the inequality constraints and dual feasibility conditions are satisfied. By generalizing the analysis in Subsection~\ref{ssec:inequality} and Subsection~\ref{ssec:equality}, we can find the worst-case operation count by making the substitutions $m\mapsto \binom{m+d}{d}-1$ and $n\mapsto \binom{d+n}{d}-1$ in (\ref{eq:ineqopcount},\ref{eq:eqopcount}).

\begin{algorithm}
\caption{Minimizes $d_p(\bfq)$ subject to $g_i(\bfq)\geq 0$ for each $0\leq i\leq \binom{d+n}{d}$ and $h_p(\bfq)=0$.}
\label{alg:optimization}
\begin{algorithmic}
\FOR{$J\subset\{0,\dots, \binom{d+n}{d}\}$}
\FOR{$i\in J$}
\STATE{$\bfb_i\gets -(E^{d,m,n}\bfpi(p))_i$}
\FOR{$j\in J$}
\STATE{$A_{ij}\gets W^{d,m,n}_{ij}-\frac{d!\delta}{2}$}
\ENDFOR
\ENDFOR
\IF{$\rank(A) = |J|$}
\STATE{$\bfx\gets A^{-1}\bfb$}
\IF{$\bfx\geq\bfzero^{|J|-1}$}
\FOR{$i\gets 0,n$}
\IF{$i\in J$}
\STATE{$\bfmu_i\gets \bfx_i$}
\ELSE
\STATE{$\bfmu_i\gets 0$}
\ENDIF
\ENDFOR
\STATE{$\nu\gets d!\sum_{|\bfalpha|=n} \bfmu_{\bfalpha}$}
\STATE{$\bfy\gets W^{d,m,n}\bfmu-\frac{\delta\nu}{2}\bfone^{d,n}+E^{d,m,n}\bfpi(p)$}
\IF{$\bfy\geq\bfzero^{d,n}$}
\STATE{$\bfq\gets U^{d,m,m}\diag(\lambda^{d,n}_0,\dots,\lambda^{d,n}_m)\left(U^{d,m,n}\right)^T\bfy$}
\RETURN{$\bfq$}
\ENDIF
\ENDIF
\ENDIF
\ENDFOR
\end{algorithmic}
\end{algorithm}

\section{Numerical results}
\label{sec:num}

In this section, we investigate the accuracy of approximating smooth functions $f:[0,1]\rightarrow [0,1]$ by polynomials whose Bernstein coefficients are nonnegative. Following the discussion in Section~\ref{sec:notation}, we first compute the Bernstein coefficients of the optimal (in the $L^2$ norm) polynomial $p^{*}$ in $\mathcal{P}^m$ via
\begin{equation}
\label{eq:proj}
\bfpi(p^{*}) = U^{m,m}\left(U^{m,m}\right)^T\bff,
\end{equation}
where
\begin{equation}
\label{eq:moments}
\bff_i = \int_0^1 f(x)B^m_i(x) dx.
\end{equation}

We can then find the best approximation in $\mathcal{P}^{m,+}$ or $\mathcal{P}^{m,n}$ by solving the various quadratic programs posed above.  We use the Python package \texttt{cvxpy}~\cite{diamond2016cvxpy} to solve our quadratic programs.  The default solver works well for the linear inequality constraints in~\eqref{qp}, and we use the interface to the Splitting Conic Solver \texttt{SCS}~\cite{ocpb:16,scs} for~\eqref{qcqp}.  We also compare these results to those obtained with Algorithm~\ref{alg:inequality} to solve the linearly constrained problems.  Also, we have implemented the CPCD algorithm for approximation with nonnegative polynomials~\cite{despres2019positive}.

A few simpler approximation schemes also provide a baseline for comparison.  We compute the error in the unconstrained $L^2$ best approximation.  Then,
the \emph{Bernstein polynomial}
\begin{equation}
  B_m(f)(x) = \sum_{i=0}^m f(i/m) B^m_i(x)
\end{equation}
of a continuous function $f$ converges uniformly to $f$ on $[0, 1]$ as $m\rightarrow \infty$, although the rate is at best $\mathcal{O}(m^{-2})$~\cite{LaiSch07}. 

Interpolation of a function by continuous piecewise linear polynomials also preserves bounds constraints, and we do so by using the Bernstein control points as interpolation nodes.  (Note, interpolation into this space exactly corresponds to the $L^2$ projection with mass lumping).  This technique is used as a stage in subcell limiting methods for conservation laws such as~\cite{kuzmin2020subcell}.

We apply these methods to four functions:
\begin{equation}
  \begin{split}
    f_0(x) & = \frac{1}{2} \left( \sin(2\pi x)+1 \right), \\
    f_1(x) & =  0.01 + \frac{x}{x^2+1}, \\
    f_2(x) & = \frac{26}{25} \left( \frac{1}{1+25(2x-1)^2} \right)- \frac{1}{26}, \\
    f_3(x) & = \frac{\pi}{2} + \tan^{-1} \left( 30 \left( x - 1/2 \right) \right).
  \end{split}
\end{equation}
See Figure~\ref{fig:functions} for plots of these.  While all of the functions are smooth, some of the functions are more difficult to approximate by polynomials; for example, $f_2$ and $f_3$ have large derivatives and so require a higher order of approximation to obtain small error than $f_0$ and $f_1$.  Superimposed with the functions are nonnegative polynomial approximations constructed by various techniques; additionaly, we include the best unconstrained polynomial approximation of each function.

We applied each of the approximation algorithms to these problems, showing the results in Figures~\ref{fig:f0acc}--~\ref{fig:f3acc}.  In the left subfigure, we show the results of linear approximation schemes -- the unconstrained $L^2$ projection as well as the bounds-respecting Bernstein polynomial and interpolation into $P^1$.  In the right subfigure, we show the nonlinear approximation algorithms -- the various quadratic programs and the CPCD approximation.

In light of Theorem~\ref{prop:estimate} above and Theorem 1.2 in~\cite{despres2017approximation}, we might expect the error in the quadratic programs to track the unconstrained $L^2$ projection up to some constant factor, but this does not seem to be the case in every situation. For each function we approximate, we see that the cone-constrained quadratic program produces an error very close to the best approximation up to degree 5, but the convergence curves typically flatten off after this.  At higher degree, \texttt{cvxpy} reports an inaccurate solution is obtained, so ill-conditioning or some other numerical difficulty is preventing us from realizing the full theoretical accuracy.

The zero at $x=\tfrac{3}{4}$ in the function $f_0$ presents a major difficulty for our approximating algorithms, as we see in Figure~\ref{fig:f0acc}.  Up to degree 5, the cone-constrained approximation seems to track the $L^2$ approximation quite closely, but after this, \texttt{cvxpy} reports an innaccurate solution.  Moreover, the linearly-constrained optimization algorithms and the CPCD algorithm all struggle, not doing appreciably better than the Bernstein polynomial or $P^1$ interpolation.

The function $f_1$, with no zeros on the interval, presents less difficult for our numerical methods.  Results are shown in Figure~\ref{fig:f1acc}.  The linearly-constrained quadratic programs give error equal to the unconstrained approximation, which indicates that the $L^2$ projection has positive coefficients.  The cone-constrained approximation also matches this until degree 5, after which it becomes inaccurate.  For this problem, all of the nonlinear approximation algorithms fare much better than the Bernstein polynomial and $P^1$ interpolant.

The function $f_2$ has simple zeros at the endpoints of the interval and has large derivatives within the interval and so is more difficult to approximate than $f_1$.  In this case, we see that the $P^1$ interpolant roughly tracks the best $L^2$ approximation.  Like out two previous problems, the cone-constrained approximation does nearly as well as the best $L^2$ approximation, but we lose accuracy as the degree increases.  It and the other nonlinear approximations all fare much better than the Bernstein polynomial.   These results are shown in Figure~\ref{fig:f2acc}, and Figure~\ref{fig:f3acc} shows similar performance for the sigmoid function $f_3(x)$.

A note comparing the results obtained via quadratic programming to those obtained with the CPCD algorithm~\cite{despres2019positive} is also in order.
\emph{A priori}, it is not possible to decide whether CPCD should produce better results than solving~\eqref{qp}.  On one hand, CPCD searches $\mathcal{P}^{m,+}$ rather than $\mathcal{P}^{m,n}$.  On the other hand, it lacks a best approximation property within the search space.  So, for any given function, it may be possible for either algorithm to outperform the other.  We observe that CPCD gives smaller errors for $f_0$ and some higher degrees of approximation for $f_2$ and $f_3$, but larger errors in approximating $f_1$.  We posit that if the best constrained $L^2$ approximation happens to lie in our search space, then solving~\eqref{qp} should win; however, if the best constrained algorithm requires a higher degree of elevation to achieve positive coefficients, then the CPCD algorithm could produce better results.  

Finally, we investigate the accuracy of approximating smooth functions $f:S_2\rightarrow [0,1]$ by polynomials whose Bernstein coefficients are nonnegative. The $L^2$ norm of the errors in the approximations are shown in Figure~\ref{fig:2daccuracy}.  We see that using \texttt{cvxpy} to solve the linearly-constrained problem~\eqref{qp} gives the same results as the KKT-based solver, but (as expected), the convergence is typically slower than for the unconstrained $L^2$ approximation.

To summarize this discussion, we note that the cone-constrained problem~\eqref{qcqp} seems to consistently deliver very good approximations up to degree 5 and that no positivity-enforcing method consistently performs well at high polynomial degree.  On the other hand, the quadratic program with linear constraints generalizes readily to approximations over the simplex, unlike CPCD or the cone-constrained quadratic program.  More work will be needed to improve the numerical accuracy at high degree and to find better ways of enforcing positivity in the multivariate case.

\begin{figure}
\centering
\begin{subfigure}{0.475\linewidth}
\centering
\begin{tikzpicture}[scale=0.725]
\begin{axis}[legend style={at={(0.05,0.05)}, anchor=south west}, xlabel={$x$}, ylabel={$y$}, thick, xmin=0, xmax=1, ymin=-0.1, no marks, xtick={0,0.25,0.5,0.75,1.0}]
\addplot[style=thick] table[x=nodes, y=sinevalues, col sep=comma]{approximation_polys.dat};
\addplot[dashed,style=thick] table[x=nodes, y=sineproj5y, col sep=comma]{approximation_polys.dat};
\addplot[dotted,style=thick] table[x=nodes, y=sineKKT5y, col sep=comma]{approximation_polys.dat};
\addplot[dash dot,style=thick] table[x=nodes, y=sineDespres5y, col sep=comma]{approximation_polys.dat};
\legend{$f_0$,proj5,KKT5,CPCD5}
\end{axis}
\end{tikzpicture}
\caption{$f_0(x) = \frac{\sin(2\pi x)+1}{2}$}
\label{subfig:f0plot}
\end{subfigure}
\hfill
\begin{subfigure}{0.475\linewidth}
\centering
\begin{tikzpicture}[scale=0.725]
\begin{axis}[legend style={at={(0.95,0.05)}, anchor=south east}, xlabel={$x$}, ylabel={$y$}, thick, xmin=0, xmax=1, ymin=0, no marks, xtick={0,0.25,0.5,0.75,1.0}]
\addplot[style=thick] table[x=nodes, y=easyratvalues, col sep=comma]{approximation_polys.dat};
\addplot[dashed,style=thick] table[x=nodes, y=easyratproj5y, col sep=comma]{approximation_polys.dat};
\addplot[dotted, style=thick] table[x=nodes, y=easyratKKT5y, col sep=comma]{approximation_polys.dat};
\addplot[dash dot, style=thick] table[x=nodes, y=easyratDespres5y, col sep=comma]{approximation_polys.dat};
\legend{$f_1$,proj5,KKT5,CPCD5}
\end{axis}
\end{tikzpicture}
\caption{$f_1(x) = 0.01 + x/(1+x^2)$}
\label{subfig:f1plot}
\end{subfigure}
\vskip\baselineskip
\begin{subfigure}{0.475\linewidth}
\centering
  \begin{tikzpicture}[scale=0.725]
\begin{axis}[legend style={at={(0.05,0.95)}, anchor=north west}, xlabel={$x$}, ylabel={$y$}, thick, xmin=0, xmax=1, ymin=-0.1, no marks, xtick={0,0.25,0.5,0.75,1.0}]
\addplot[style=thick] table[x=nodes, y=hardratvalues, col sep=comma]{approximation_polys.dat};
\addplot[dashed,style=thick] table[x=nodes, y=hardratproj5y, col sep=comma]{approximation_polys.dat};
\addplot[dotted,style=thick] table[x=nodes, y=hardratKKT5y, col sep=comma]{approximation_polys.dat};
\addplot[dash dot,style=thick] table[x=nodes, y=hardratDespres5y, col sep=comma]{approximation_polys.dat};
\legend{$f_2$,proj5,KKT5,CPCD5}
\end{axis}
\end{tikzpicture}
\caption{$f_2(x) = \frac{26}{25}\left( \frac{1}{1+25(2x-1)^2}-\frac{1}{26}\right)$}
\label{subfig:f2plot}
\end{subfigure}
\begin{subfigure}{0.475\linewidth}
  \centering
\begin{tikzpicture}[scale=0.725]
\begin{axis}[legend style={at={(0.05,0.95)}, anchor=north west}, xlabel={$x$}, ylabel={$y$}, thick, xmin=0, xmax=1, ymin=-0.1, no marks, xtick={0,0.25,0.5,0.75,1.0}]
\addplot[style=thick] table[x=x, y=f, col sep=comma]{sigmoid_plots.dat};
\addplot[dashed,style=thick] table[x=x, y=Pif, col sep=comma]{sigmoid_plots.dat};
\addplot[dotted,style=thick] table[x=x, y=KKT, col sep=comma]{sigmoid_plots.dat};
\addplot[dash dot,style=thick] table[x=x, y=Despres, col sep=comma]{sigmoid_plots.dat};
\legend{$f_3$,proj5,KKT5,CPCD5}
\end{axis}
\end{tikzpicture}
\caption{$f_3(x) = \frac{\pi}{2}+\tan^{-1}(30(x-1/2))$}
\label{subfig:f3plot}
\end{subfigure}
\caption{Plots of the functions being approximated and their degree 5 polynomial approximations. KKT5 and CPCD5 refer to the finding the degree 5 approximations through the KKT and CPCD algorithms, respectively; proj5 is the unconstrained best degree 5 approximation.}
\label{fig:functions}
\end{figure}
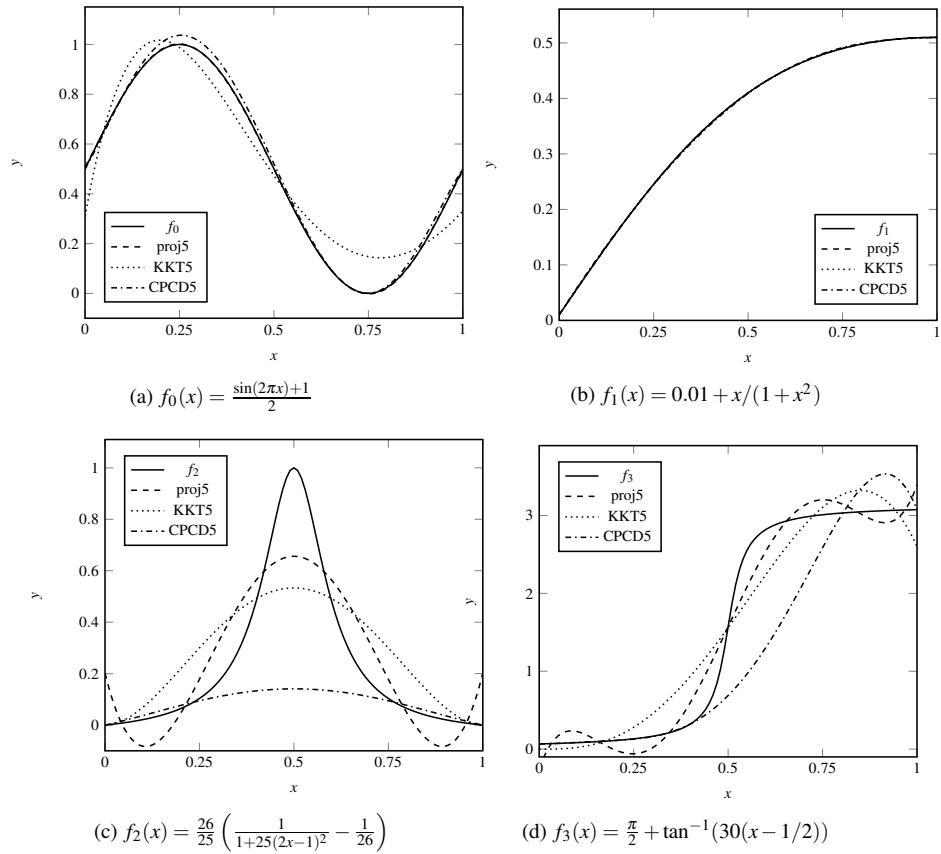

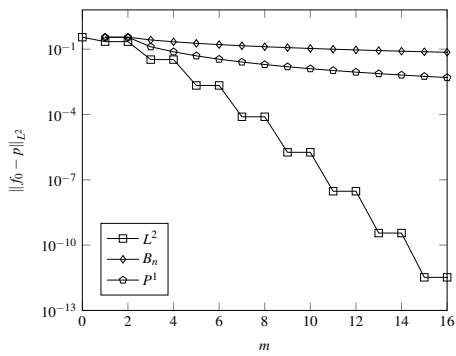
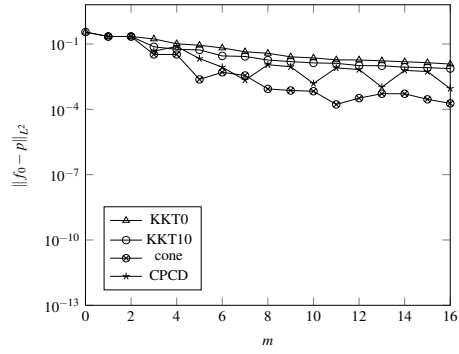
\begin{figure}
\centering
\vskip\baselineskip
\begin{subfigure}{0.475\linewidth}
\centering
\begin{tikzpicture}[scale=0.7]
\begin{semilogyaxis}[legend style={at={(0.05,0.05)}, anchor=south west}, xlabel=$m$, ylabel=$\| f_0-p \|_{L^2}$, xmin=0, xmax=16, ymin=1e-13]
\addplot[mark=square]table[x=m,y=projection,col sep=comma]{closest_poly_v3_info0.dat};
\addplot[mark=diamond] table [x=m, y=Bern, col sep=comma]{extra_experiments_sine.dat};
\addplot[mark=pentagon] table [x=m, y=P1, col sep=comma]{extra_experiments_sine.dat};
\legend{$L^2$, $B_n$, $P^1$}
\end{semilogyaxis}
\end{tikzpicture}
\caption{Linear approximations of $f_0$: unconstrained $L^2$ projection, Bernstein quasi-interpolation, and interpolation onto $P^1$ submesh.}
\label{subfig:f0lin}
\end{subfigure}
\hfill
\begin{subfigure}{0.475\linewidth}
\centering
\begin{tikzpicture}[scale=0.7]
\begin{semilogyaxis}[legend style={at={(0.05,0.05)}, anchor=south west}, xlabel=$m$, ylabel=$\| f_0-p \|_{L^2}$, xmin=0, xmax=16, ymin=1e-13]
\addplot [mark=triangle] table [x=m, y=0, col sep=comma]{closest_poly_v3_info0.dat};
\addplot[mark=o] table [x=m, y=10, col sep=comma]{closest_poly_v3_info0.dat};
\addplot[mark=otimes] table [x=m, y=err, col sep=comma]{cone_op_sine.dat};
\addplot[mark=star] table [x=m, y=error, col sep=comma]{despres_v2_info0.dat};
\legend{KKT0, KKT10,cone,CPCD}
\end{semilogyaxis}
\end{tikzpicture}
\caption{Nonlinear approximations: Solution of~\eqref{qp} with 0 and 10 degrees of elevation (KKT0 and KKT10), solution of quadratic program~\eqref{qcqp} (cone), and the CPCD algorithm}
\label{subfig:f0nonlin}
\end{subfigure}
\caption{Error in approximationing $f_0(x) = \frac{\sin(2\pi x)+1}{2}$ using a variety of methods.}
\label{fig:f0acc}
\end{figure}

\begin{figure}
\centering
\vskip\baselineskip
\begin{subfigure}{0.475\linewidth}
\centering
\begin{tikzpicture}[scale=0.7]
\begin{semilogyaxis}[legend style={at={(0.05,0.05)}, anchor=south west}, xlabel=$m$, ylabel=$\| f_0-p \|_{L^2}$, xmin=0, xmax=16, ymin=1e-13]
\addplot[mark=square]table[x=m,y=projection,col sep=comma]{closest_poly_v3_info1.dat};
\addplot[mark=diamond] table [x=m, y=Bern, col sep=comma]{extra_experiments_easy_rat.dat};
\addplot[mark=pentagon] table [x=m, y=P1, col sep=comma]{extra_experiments_easy_rat.dat};
\legend{$L^2$, $B_n$, $P^1$}
\end{semilogyaxis}
\end{tikzpicture}
\caption{Linear approximations of $f_1$: unconstrained $L^2$ projection, Bernstein quasi-interpolation, and interpolation onto $P^1$ submesh.}
\label{subfig:f1lin}
\end{subfigure}
\hfill
\begin{subfigure}{0.475\linewidth}
\centering
\begin{tikzpicture}[scale=0.7]
\begin{semilogyaxis}[legend style={at={(0.05,0.05)}, anchor=south west}, xlabel=$m$, ylabel=$\| f_0-p \|_{L^2}$, xmin=0, xmax=16, ymin=1e-13]
\addplot [mark=triangle] table [x=m, y=0, col sep=comma]{closest_poly_v3_info1.dat};
\addplot[mark=o] table [x=m, y=10, col sep=comma]{closest_poly_v3_info1.dat};
\addplot[mark=otimes] table [x=m, y=err, col sep=comma]{cone_op_easy_rat.dat};
\addplot[mark=star] table [x=m, y=error, col sep=comma]{despres_v2_info1.dat};
\legend{KKT0, KKT10,cone,CPCD}
\end{semilogyaxis}
\end{tikzpicture}
\caption{Nonlinear approximations: Solution of~\eqref{qp} with 0 and 10 degrees of elevation (KKT0 and KKT10), solution of quadratic program~\eqref{qcqp} (cone), and the CPCD algorithm}
\label{subfig:f1nonlin}
\end{subfigure}
\caption{Error in approximationing $f_1(x)= 0.01 + x/(x^2+1)$ using a variety of methods.  The Bernstein operator and $P^1$ interpolant give very modest decrease in the error, while the solution of~\eqref{qp} with any elevation actually produces the best approximation.  For low polynomial degrees, solving~\eqref{qcqp} also agrees with these, but the solver fails to find an accurate optimal solution after this point.}
\label{fig:f1acc}
\end{figure}
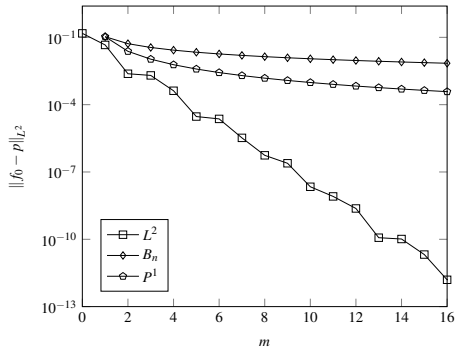
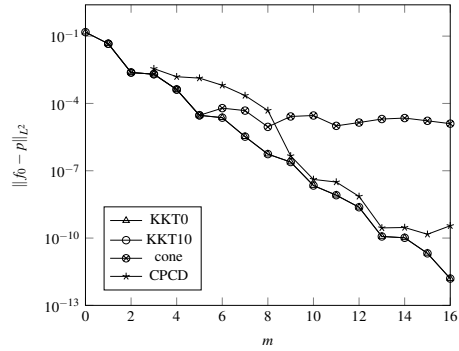

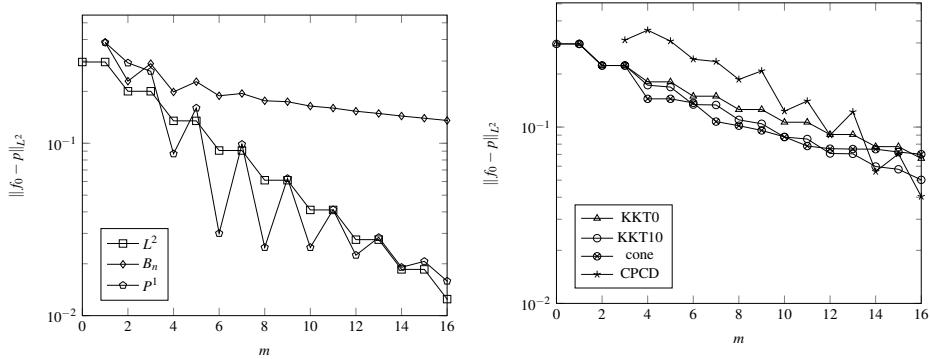
\begin{figure}
\centering
\vskip\baselineskip
\begin{subfigure}{0.475\linewidth}
\centering
\begin{tikzpicture}[scale=0.7]
\begin{semilogyaxis}[legend style={at={(0.05,0.05)}, anchor=south west}, xlabel=$m$, ylabel=$\| f_0-p \|_{L^2}$, xmin=0, xmax=16, ymin=1e-2]
\addplot[mark=square]table[x=m,y=projection,col sep=comma]{closest_poly_v3_info2.dat};
\addplot[mark=diamond] table [x=m, y=Bern, col sep=comma]{extra_experiments_hard_rat.dat};
\addplot[mark=pentagon] table [x=m, y=P1, col sep=comma]{extra_experiments_hard_rat.dat};
\legend{$L^2$, $B_n$, $P^1$}
\end{semilogyaxis}
\end{tikzpicture}
\caption{Linear approximations of $f_2$: unconstrained $L^2$ projection, Bernstein quasi-interpolation, and interpolation onto $P^1$ submesh.}
\label{subfig:f2lin}
\end{subfigure}
\hfill
\begin{subfigure}{0.475\linewidth}
\centering
\begin{tikzpicture}[scale=0.7]
\begin{semilogyaxis}[legend style={at={(0.05,0.05)}, anchor=south west}, xlabel=$m$, ylabel=$\| f_0-p \|_{L^2}$, xmin=0, xmax=16, ymin=1e-2]
\addplot [mark=triangle] table [x=m, y=0, col sep=comma]{closest_poly_v3_info2.dat};
\addplot[mark=o] table [x=m, y=10, col sep=comma]{closest_poly_v3_info2.dat};
\addplot[mark=otimes] table [x=m, y=err, col sep=comma]{cone_op_hard_rat.dat};
\addplot[mark=star] table [x=m, y=error, col sep=comma]{despres_v2_info2.dat};
\legend{KKT0, KKT10,cone,CPCD}
\end{semilogyaxis}
\end{tikzpicture}
\caption{Nonlinear approximations: Solution of~\eqref{qp} with 0 and 10 degrees of elevation (KKT0 and KKT10), solution of quadratic program~\eqref{qcqp} (cone), and the CPCD algorithm}
\label{subfig:f2nonlin}
\end{subfigure}
\caption{Error in approximationing $f_2(x)= (26/25)(1/(1+25(2x-1)^2)-1/26) $ using a variety of methods.  }
\label{fig:f2acc}
\end{figure}

\begin{figure}
\centering
\vskip\baselineskip
\begin{subfigure}{0.475\linewidth}
\centering
\begin{tikzpicture}[scale=0.7]
\begin{semilogyaxis}[legend style={at={(0.05,0.05)}, anchor=south west}, xlabel=$m$, ylabel=$\| f_0-p \|_{L^2}$, xmin=0, xmax=16, ymin=1e-2]
\addplot[mark=square]table[x=m,y=projection,col sep=comma]{closest_poly_v3_info3.dat};
\addplot[mark=diamond] table [x=m, y=Bern, col sep=comma]{extra_experiments_sigmoid.dat};
\addplot[mark=pentagon] table [x=m, y=P1, col sep=comma]{extra_experiments_sigmoid.dat};
\legend{$L^2$, $B_n$, $P^1$}
\end{semilogyaxis}
\end{tikzpicture}
\caption{Linear approximations of $f_3$: unconstrained $L^2$ projection, Bernstein quasi-interpolation, and interpolation onto $P^1$ submesh.}
\label{subfig:f3lin}
\end{subfigure}
\hfill
\begin{subfigure}{0.475\linewidth}
\centering
\begin{tikzpicture}[scale=0.7]
\begin{semilogyaxis}[legend style={at={(0.05,0.05)}, anchor=south west}, xlabel=$m$, ylabel=$\| f_0-p \|_{L^2}$, xmin=0, xmax=16, ymin=1e-2]
\addplot [mark=triangle] table [x=m, y=0, col sep=comma]{closest_poly_v3_info3.dat};
\addplot[mark=o] table [x=m, y=10, col sep=comma]{closest_poly_v3_info3.dat};
\addplot[mark=otimes] table [x=m, y=err, col sep=comma]{cone_op_sigmoid.dat};
\addplot[mark=star] table [x=m, y=error, col sep=comma]{despres_sigmoid.dat};
\legend{KKT0, KKT10,cone,CPCD}
\end{semilogyaxis}
\end{tikzpicture}
\caption{Nonlinear approximations: Solution of~\eqref{qp} with 0 and 10 degrees of elevation (KKT0 and KKT10), solution of quadratic program~\eqref{qcqp} (cone), and the CPCD algorithm}
\label{subfig:f3nonlin}
\end{subfigure}
\caption{Error in approximationing $f_3(x)= \pi/2 + \tan^{-1}(30(x-1/2))$ using a variety of methods.  }
\label{fig:f3acc}
\end{figure}
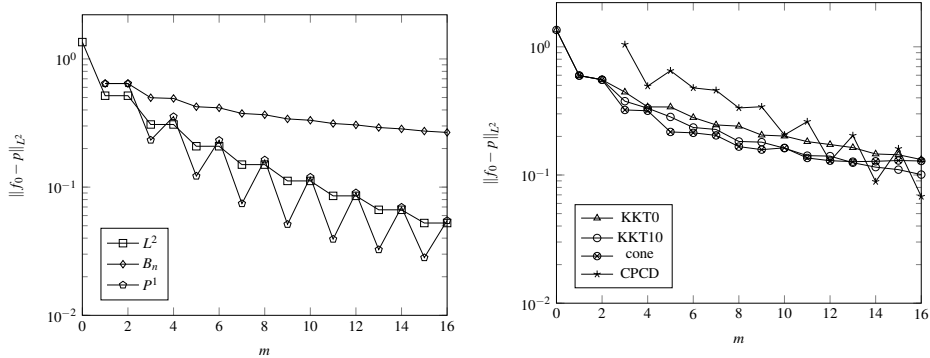

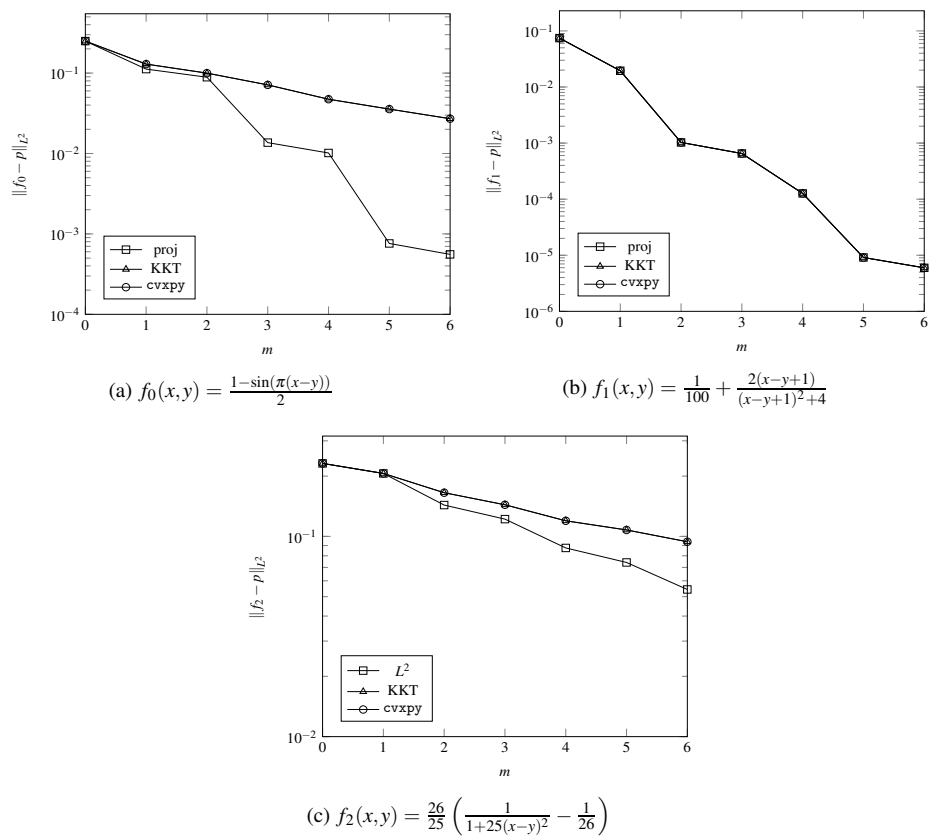
\begin{figure}
\centering
\vskip\baselineskip
\begin{subfigure}{0.475\linewidth}
\centering
\begin{tikzpicture}[scale=0.7]
\begin{semilogyaxis}[legend style={at={(0.05,0.05)}, anchor=south west}, xlabel=$m$, ylabel=$\| f_0-p \|_{L^2}$, xmin=0, xmax=6, ymin=1e-4]
\addplot[mark=square]table[x=m,y=projection,col sep=comma]{sine_2d_info_KKT.dat};
\addplot [mark=triangle] table [x=m, y=KKT, col sep=comma]{sine_2d_info_KKT.dat};
\addplot[mark=o] table [x=m, y=0, col sep=comma]{cp_sine_2d_info.dat};
\legend{proj, KKT, \texttt{cvxpy}}
\end{semilogyaxis}
\end{tikzpicture}
\caption{$f_0(x,y) = \frac{1-\sin(\pi(x-y))}{2}$}
\label{subfig:2df0}
\end{subfigure}
\hfill
\begin{subfigure}{0.475\linewidth}
\centering
\begin{tikzpicture}[scale=0.7]
\begin{semilogyaxis}[legend style={at={(0.05,0.05)}, anchor=south west}, xlabel=$m$, ylabel=$\| f_1-p \|_{L^2}$, xmin=0, xmax=6, ymin=1e-6]
\addplot[mark=square]table[x=m,y=projection,col sep=comma]{easy_rat_2d_info_KKT.dat};
\addplot [mark=triangle] table [x=m, y=KKT, col sep=comma]{easy_rat_2d_info_KKT.dat};
\addplot[mark=o] table [x=m, y=0, col sep=comma]{cp_easy_rat_2d_info.dat};
\legend{proj, KKT, \texttt{cvxpy}}
\end{semilogyaxis}
\end{tikzpicture}
\caption{$f_1(x,y) = \frac{1}{100}+\frac{2(x-y+1)}{(x-y+1)^2+4}$}
\label{subfig:2df1}
\end{subfigure}
\vskip\baselineskip
\begin{subfigure}{0.475\linewidth}
\centering
\begin{tikzpicture}[scale=0.7]
\begin{semilogyaxis}[legend style={at={(0.05,0.05)}, anchor=south west}, xlabel=$m$, ylabel=$\| f_2-p \|_{L^2}$, xmin=0, xmax=6, ymin=1e-2]
\addplot[mark=square]table[x=m,y=projection,col sep=comma]{hard_rat_2d_info_KKT.dat};
\addplot [mark=triangle] table [x=m, y=KKT, col sep=comma]{hard_rat_2d_info_KKT.dat};
\addplot[mark=o] table [x=m, y=0, col sep=comma]{cp_hard_rat_2d_info.dat};
\legend{$L^2$, KKT, \texttt{cvxpy}}
\end{semilogyaxis}
\end{tikzpicture}
\caption{$f_2(x,y) = \frac{26}{25}\left(\frac{1}{1+25(x-y)^2}-\frac{1}{26}\right)$}
\label{subfig:2df2}
\end{subfigure}
\caption{$L^2$ error in approximating $f_j(x,y)$. We use $L^2$ to denote the unconstrained projection of $f_j$ into $\mathcal{P}^m$; we use KKT to denote using Algorithm~\ref{alg:optimization} to find the optimal polynomial in $\mathcal{P}^{m,m}$; and we use \texttt{cvxpy} to denote using \texttt{cvxpy} to find the optimal polynomial in $\mathcal{P}^{m,m}$.}
\label{fig:2daccuracy}
\end{figure}

\section{Conclusions and future work}
\label{sec:fin}
We have proposed new techniques to pose bounds-constrained polynomial approximation over the simplex in terms of quadratic programming.   Quadratic constraints can be used to exactly enforce nonnegativity for univariate polynomials, and the convex hull property of Bernstein polynomials allows us to give explicit sufficient conditions as linear inequality constraints.  These techniques perform comparably to an existing method, and they extend naturally to multivariate polynomials on a simplex of any dimension.   In the future, we hope to make a more thorough study of approximation properties of these algorithms and how to improve the accuracy of solving the quadratically constrained problem.

\clearpage

\bibliographystyle{spmpsci}
\bibliography{references}

\end{document}